\documentclass[11pt,reqno]{amsart}

\usepackage{amsmath,amssymb,amsfonts,amsthm,mathtools}
\usepackage[margin=1.3in]{geometry}
\usepackage{enumitem}
\usepackage{hyperref}
\usepackage{microtype}
\usepackage{comment}

\hypersetup{
    colorlinks=true,
    linkcolor=blue,
    citecolor=magenta,
    urlcolor=blue
}
\numberwithin{equation}{section}

\newtheorem{theorem}{Theorem}[section]
\newtheorem{proposition}[theorem]{Proposition}
\newtheorem{lemma}[theorem]{Lemma}
\newtheorem{corollary}[theorem]{Corollary}

\theoremstyle{definition}

\newtheorem{remark}[theorem]{Remark}

\newcommand{\R}{\mathbb R}

\newcommand{\Sn}{\mathbb S}
\newcommand{\Om}{\Omega}
\newcommand{\Sig}{\Sigma}

\newcommand{\dd}{\,d}
\newcommand{\calH}{\mathcal H}

\newcommand{\diver}{\operatorname{div}}

\newcommand{\HH}{\mathbb H}

\allowdisplaybreaks

\title[Weinstock inequalities]{Weinstock inequalities for outward-minimizing domains}

\author[C. Gao]{Chaoqun Gao}
\author[Y. Wei]{Yong Wei}
\author[R. Zhou]{Rong Zhou}
\address{School of Mathematical Sciences, University of Science and Technology of China, Hefei 230026, P.R. China}
\email{\href{mailto:gaochaoqun@mail.ustc.edu.cn}{gaochaoqun@mail.ustc.edu.cn}}
\email{\href{mailto:yongwei@ustc.edu.cn}{yongwei@ustc.edu.cn}}
\email{\href{mailto:zhourong@mail.ustc.edu.cn}{zhourong@mail.ustc.edu.cn}}

\begin{document}

\subjclass[2020]{53E10, 53C42, 35P15, 58J50}
\keywords{Steklov eigenvalue, Weinstock inequality, inverse mean curvature flow, outward-minimizing}

\begin{abstract}
We prove sharp Weinstock inequalities for the first nonzero Steklov eigenvalue of smooth outward-minimizing domains in Euclidean space and hyperbolic space.  The method is based on the weak inverse mean curvature flow of Huisken--Ilmanen.  The main new ingredient is an endpoint distributional monotonicity argument obtained from the calibrated weak formulation and the Gauss--Green formula for divergence-measure fields.
\end{abstract}

\maketitle


\section{Introduction}

Let $(M^n,g)$ be either the Euclidean space $\R^n$ or the hyperbolic space $\HH^n$, and let $\Omega\subset M$ be a bounded connected domain with smooth boundary $\Sigma=\partial\Omega$.  The Steklov eigenvalue problem is
\begin{equation*}
\begin{cases}
    \Delta u=0 & \text{in }\Omega,\\
    \partial_\nu u=\sigma u & \text{on }\Sigma,
\end{cases}
\end{equation*}
where $\Delta$ is the Laplace operator and $\nu$ denotes the outward unit normal.  The first nonzero Steklov eigenvalue is characterized by
\begin{equation}\label{eq:steklov-variational}
    \sigma_1(\Omega)
    =\inf\left\{
        \frac{\int_\Omega |\nabla u|^2\,dv}{\int_{\Sigma} u^2\,d\mu}:
        u\in H^1(\Omega)\setminus\{0\},\quad
        \int_{\Sigma}u\,d\mu=0
      \right\}.
\end{equation}
Here and below $\sigma_1$ denotes the first positive Steklov eigenvalue.  The Steklov problem was introduced by Vladimir Steklov at the turn of the $20$th century.  It is equivalently the spectral problem for the Dirichlet-to-Neumann map, and it arises in inverse problems, hydrodynamics, free boundary minimal surface theory, and differential geometry.  We refer to \cite[Chapter~7]{LMP2023} for an introduction to the Steklov problem and to \cite{CGGS24,Fra20,GP2017} for surveys on recent progress.

One of the central themes in Steklov spectral geometry is the search for isoperimetric upper bounds.  In dimension two, Weinstock \cite{W1954} proved that among simply connected planar domains with prescribed boundary length the disk maximizes $\sigma_1$.  The simple connectivity hypothesis is essential in the plane, and Girouard--Polterovich  \cite[Open Problem~2]{GP2017} ask for the maximal value of $\sigma_1$ among Euclidean domains with prescribed perimeter and for the domains, or limiting sequences of domains, on which it is realized.  In higher Euclidean dimensions, Fraser--Schoen  \cite{FS2019} showed that the direct analogue of Weinstock's theorem is false even among smooth contractible domains.  The recent survey \cite[Section~4]{CGGS24} emphasizes this obstruction and asks for geometric settings in which sharp ball-type inequalities can still be recovered.

Bucur--Ferone--Nitsch--Trombetti \cite{BFNT2021} proved the sharp Euclidean inequality in the class of convex domains.  Kwong--Wei \cite{KwongWei2023} later weakened convexity to the star-shaped mean-convex class by using a weighted three-term isoperimetric inequality \cite{GR2020} with a smooth inverse mean curvature flow argument.  Their theorem fits into the broader program of understanding which geometric assumptions can replace convexity in higher-dimensional Steklov inequalities.  

In space forms, \cite[Open Question~4.27]{CGGS24} asks in particular whether the convex-domain Weinstock inequality has hyperbolic and spherical analogues.  In hyperbolic space, Gu--Li--Wan \cite{GuLiWan2025} proved this for smooth star-shaped mean-convex domains in $\HH^n$, $n\geq4$.  Gu's thesis \cite{GuThesis2026} (see also \cite{GLW26}) contains the additional low-dimensional case $n=3$.  Their proof again uses smooth inverse mean curvature flow, together with a special volume-area ratio function $g$ and an auxiliary function $h$.

The purpose of this paper is to extend both results \cite{KwongWei2023,GuLiWan2025,GuThesis2026} from star-shaped mean-convex domains to outward-minimizing domains.  For a finite-perimeter set $E$ we write $P(E)=\calH^{n-1}(\partial^*E)$.  See \cite[Chapters~12 and~15]{Maggi2012} for the Euclidean theory of finite-perimeter sets and reduced boundaries.  A bounded finite-perimeter set $\Omega\subset M$ is outward minimizing if
\begin{equation*}
    P(E)\geq P(\Omega)
\end{equation*}
for every bounded finite-perimeter set $E$ that contains $\Omega$ up to measure zero.  It is called strictly outward minimizing if equality implies that $E=\Omega$ up to measure zero.  This is a variational perimeter condition rather than a pointwise curvature condition.  In Euclidean space, smooth star-shaped strictly mean-convex domains are strictly outward minimizing \cite[Corollary~3.8]{FM22}, but the outward-minimizing class is much larger and the classical inverse mean curvature flow may develop singularities from such initial data.  We therefore use the weak inverse mean curvature flow of Huisken--Ilmanen \cite{HI2001}.  

Our first theorem is the Euclidean Weinstock inequality in the outward-minimizing class.

\begin{theorem}\label{thm:weinstock}
Let $\Omega\subset\R^n$, $n\geq3$, be a bounded outward-minimizing domain with smooth  boundary $\Sigma=\partial\Omega$.  Then
\begin{equation}\label{eq:weinstock-main}
    \sigma_1(\Omega)|\partial\Omega|^{\frac1{n-1}}
    \leq
    \sigma_1(B^n)|\partial B^n|^{\frac1{n-1}}
    =|\Sn^{n-1}|^{\frac1{n-1}}.
\end{equation}
Equality holds if and only if $\Omega$ is a round ball.
\end{theorem}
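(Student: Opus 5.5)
We follow the Brock--Kwong--Wei strategy, with the weak inverse mean curvature flow of Huisken--Ilmanen replacing the smooth flow. Use the coordinate functions $x_1,\dots,x_n$ as test functions in \eqref{eq:steklov-variational}. After translating $\Omega$ so that $\int_\Sigma x_i\,d\mu=0$ for every $i$, summing the $n$ Rayleigh quotients gives
\begin{equation*}
    \sigma_1(\Omega)\int_\Sigma |x|^2\,d\mu \leq n|\Omega|.
\end{equation*}
It therefore suffices to prove the weighted isoperimetric-type inequality
\begin{equation*}
    n|\Omega|\,|\Sigma|^{\frac1{n-1}} \leq |\Sn^{n-1}|^{\frac1{n-1}}\int_\Sigma |x|^2\,d\mu ,
\end{equation*}
with equality only for balls centered at the origin. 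For star-shaped mean-convex domains this is obtained in \cite{KwongWei2023} from the three-term inequality of \cite{GR2020}. We plan to prove it instead by a monotonicity argument along the flow.

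Let $u$ be the proper weak IMCF solution of \cite{HI2001} with initial condition $\Omega=\{u<0\}$. Since $\Omega$ is outward minimizing, we have $\Omega_0^+=\Omega$ up to measure zero, and $|\partial\{u<t\}|=e^t|\Sigma|$ for every $t\geq 0$. Consider
\begin{equation*}
    Q(t)=|\Sigma_t|^{-\frac{n+1}{n-1}}\Big(\int_{\Sigma_t}|x|^2\,d\mu-c_n|E_t|\,|\Sigma_t|^{\frac1{n-1}}\Big),
    \qquad c_n=n|\Sn^{n-1}|^{-\frac1{n-1}},
\end{equation*}
where $E_t=\{u<t\}$ and $\Sigma_t=\partial^*E_t$. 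The exponents are chosen so that $Q$ is scale invariant and vanishes on round spheres centered at $0$. Formally, the evolution along IMCF is
\begin{equation*}
    \frac{d}{dt}\int_{\Sigma_t}|x|^2\,d\mu=\int_{\Sigma_t}\Big(|x|^2+\frac{2\langle x,\nu\rangle}{H}\Big)\,d\mu,
    \qquad
    \frac{d}{dt}|E_t|=\int_{\Sigma_t}\frac1H\,d\mu .
\end{equation*}
Combining the Heintze--Karcher inequality $\int 1/H\geq \frac{n}{n-1}|E_t|$ with Minkowski-type and Cauchy--Schwarz bounds, we would show that $Q$ is nonincreasing. Asymptotically the flow becomes round at large scale (Huisken--Ilmanen's blow-down), so $Q(t)\to0$. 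This gives $Q(0)\geq0$, which is exactly the needed inequality. The translation is harmless, since the centering only enters through the choice of test functions.

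The main obstacle is making the monotonicity rigorous for the weak flow. The level sets are only $C^{1,\alpha}$ away from small singular sets, $H$ exists only weakly with $H=|\nabla u|$ a.e., and the flow jumps where $E_t$ is replaced by its strictly minimizing hull. Our plan is to write each term as an integral over $\{s<u<t\}$ via the coarea formula, using the identity $\operatorname{div}(\nabla u/|\nabla u|)=|\nabla u|$. We then apply the Gauss--Green formula for the divergence-measure field $|x|^2\nabla u/|\nabla u|$, or $x$ together with the calibrating field, between two level sets. This yields a distributional inequality $Q(t)-Q(s)\leq0$ that holds across jumps, because jumps only decrease area relative to volume, with the correct sign for the weighted term. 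The endpoint $t=0$ needs care: we use that $\Sigma$ is smooth and outward minimizing, so the initial set is not jumped and the boundary terms at $t=0$ equal the smooth quantities. The delicate point is the $\langle x,\nu\rangle/H$ term. We would bound it by Cauchy--Schwarz together with $\int H^{-1}$ controlled through the calibration, which avoids needing pointwise positivity of $H$.

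For rigidity, equality forces equality in the Rayleigh quotient step and in the monotonicity at $t=0$. This makes $\Sigma$ umbilic, or makes each $x_i$ a Steklov eigenfunction, so that $x\cdot\nu=\sigma_1^{-1}$ is constant on $\Sigma$. Either conclusion gives a round sphere. Finally, $\sigma_1(B^n)=1$ for the unit ball gives the value $|\Sn^{n-1}|^{1/(n-1)}$ on the right-hand side.
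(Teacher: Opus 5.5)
There is a genuine gap. The functional $Q$ you propose is not monotone, even along the smooth flow, so no combination of Heintze--Karcher, Minkowski and Cauchy--Schwarz can yield $Q'\leq0$. Using your evolution formulas and $|\Sigma_t|=e^t|\Sigma|$, one gets
\[
|\Sigma_t|^{\frac{n+1}{n-1}}Q'(t)
=2\int_{\Sigma_t}\frac{\langle x,\nu\rangle}{H}\,d\mu
-\frac{2}{n-1}\int_{\Sigma_t}|x|^2\,d\mu
-c_n|\Sigma_t|^{\frac1{n-1}}\Bigl(\int_{\Sigma_t}\frac{d\mu}{H}-\frac{n}{n-1}|E_t|\Bigr).
\]
On a nearly flat piece of $\Sigma_t$ the $1/H$ terms dominate. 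Their sign there is that of $2\langle x,\nu\rangle-nR_t$, where $R_t=(|\Sigma_t|/|\Sn^{n-1}|)^{\frac1{n-1}}$.

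Here is a concrete counterexample in $\R^3$. Take a solid cylinder of radius $1$ and length $10$, closed by a hemisphere at one end and by a spherical cap of radius $\rho$ at the other. Smooth the corner at a fixed small scale, so the domain is convex with $H>0$. Put the origin at the boundary barycenter. Then the nearly flat end lies at distance $\approx 121/23\approx5.3$, whereas $\tfrac32R_0\approx3.6$. Hence $|\Sigma|^2Q'(0)\approx\frac{\pi\rho}{2}(10.5-7.2)+O(1)\to+\infty$ as $\rho\to\infty$.

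So $Q(0)\geq0$ is true, but it does not follow from monotonicity of this $Q$. The same pointwise comparison of $2\langle x,\nu\rangle$ with $c_n|\Sigma_t|^{1/(n-1)}$ governs the jump terms, so your claim that jumps carry the right sign is also unfounded. Moreover, mere outward minimality does not give $\Omega_0^+=\Omega$; that needs strict outward minimality. So the jump at $t=0$ cannot be dismissed either.

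The missing idea is to subtract a volume term whose time derivative absorbs the cross term pointwise. The paper uses $F_k(t)=\int_{\Sigma_t}r^k\,d\mu_t-k\int_{\Omega_t}r^{k-1}\,dx$, and only $k=1$ is needed. In the calibrated weak formulation, Lemma~\ref{lem:endpoint-transport} gives $D_tF_k=A_k\,dt-\mathfrak e_k$, where
\[
\mathfrak e_k=u_\#\bigl(kr^{k-1}(1-\partial_r\cdot\nu)\,dx\bigr)+\bigl(\int_\Sig r^k(1-\tau)\,d\mu\bigr)\delta_0 .
\]
This defect is nonnegative simply because $|\nu|\leq1$ and $\tau\leq1$. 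No $1/H$ ever appears, and all jumps, including the one at $t=0$, come with the correct sign.

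The divergence estimate $(n-1+k)\int_E r^{k-1}\leq\int_{\partial^*E}r^k$ then closes the inequality $D_tF_k\leq\frac{n-1+k}{n-1}F_k$. Monotonicity and the blow-down limit give the three-term inequality. For $k=1$, H\"older and Young convert it into $\int_\Sig r^2\geq|B^n|^{-2/n}|\Sig|\,|\Om|^{2/n}$, and the isoperimetric inequality finishes the proof. Your rigidity argument, that equality forces $\nu=\sigma_1x$ on $\Sigma$, is fine and simpler than the paper's, but it only helps once the inequality is established.
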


A key ingredient for proving Theorem \ref{thm:weinstock} is a three-term geometric inequality in Proposition \ref{prop:weighted-three-term}.  If $r=|x|$ is the Euclidean distance from a fixed origin, it asserts that for every $k>0$
\begin{equation*}
    \int_\Sigma r^k\,d\mu
    \geq
    \frac{n-1}{n-1+k}|\Sn^{n-1}|^{-\frac{k}{n-1}}
    |\Sigma|^{\frac{n-1+k}{n-1}}
    + k\int_\Omega r^{k-1}\,dx.
\end{equation*}
This extends the Euclidean three-term inequality of Kwong--Wei \cite{KwongWei2023} from the smooth star-shaped mean-convex setting to the outward-minimizing setting. Let $\{\Omega_t\}_{t\geq0}$ denote the sublevel-set domains of the weak
inverse mean curvature flow starting from $\Omega$, and set
\[
    \Sigma_t:=\partial^*\Omega_t,
    \qquad
    \mu_t:=\mathcal H^{n-1}\mathbin{\llcorner}\Sigma_t.
\]
The proof is based on the monotonicity of
\begin{equation*}
    |\Sigma_t|^{-\frac{n-1+k}{n-1}}
    \left(\int_{\Sigma_t}r^k\,d\mu_t
        -k\int_{\Omega_t}r^{k-1}\,dx\right)
\end{equation*}
along the weak flow.

Our second theorem is the hyperbolic analogue.

\begin{theorem}\label{thm:hyp-outward-weinstock}
Let $\Omega\subset\HH^n$, $n\geq3$,  be a bounded outward-minimizing domain with smooth boundary $\Sigma=\partial\Omega$.  Let $\Omega^*$ be a geodesic ball satisfying $ |\partial\Omega^*|=|\partial\Omega|$.  Then
\begin{equation}\label{eq:hyp-weinstock-final}
    \sigma_1(\Omega)\leq\sigma_1(\Omega^*).
\end{equation}
Equality holds if and only if $\Omega$ is a geodesic ball.
\end{theorem}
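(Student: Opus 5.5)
The plan is to follow the Gu--Li--Wan strategy, using the weak inverse mean curvature flow of Huisken--Ilmanen in place of the smooth flow.

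\textbf{Step 1: test functions and the Rayleigh quotient.} After a hyperbolic isometry, I place the centre of mass of $\Sigma$ at the origin, using Brouwer's fixed point theorem as in Weinstock's original argument. With $r$ the geodesic distance to the origin, let $f(r)$ be the radial profile of the first Steklov eigenfunction on the ball $\Omega^*$ of radius $R$, normalized so that $f'(R)=\sigma_1(\Omega^*)f(R)$. The test functions are $u_i=f(r)\,x_i/r$, with $x_i/r$ the coordinate functions on the unit sphere. The centre-of-mass condition gives $\int_\Sigma u_i=0$. Summing the Rayleigh quotients \eqref{eq:steklov-variational} over $i$ yields
\[
\sigma_1(\Omega)\int_\Sigma f^2\,d\mu\;\leq\;\int_\Omega\Big(f'^2+\frac{(n-1)f^2}{\sinh^2 r}\Big)dv .
\]
It therefore suffices to prove
\[
\int_\Omega\Big(f'^2+\frac{(n-1)f^2}{\sinh^2 r}\Big)dv\;\leq\;\sigma_1(\Omega^*)\int_\Sigma f^2\,d\mu .
\]
Where needed, $f$ is modified outside $[0,R]$, as in \cite{GuLiWan2025,GuThesis2026}, so that the relevant radial weights have the right monotonicity.

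\textbf{Step 2: a hyperbolic three-term inequality.} The key ingredient is an analogue of Proposition~\ref{prop:weighted-three-term}:
\[
\int_\Sigma \phi(r)\,d\mu-\int_\Omega \psi(r)\,dv\;\geq\; \Phi(|\Sigma|),
\]
with $\phi=f^2$ and $\psi$ chosen so that $\phi'=\psi$ plus lower order terms. Here $\Phi$ is the function giving equality on geodesic balls centred at the origin. Using $\Delta$ of radial functions together with the divergence theorem, this reduces the claim in Step 1 to the inequality $\Phi(|\Sigma|)\geq\Phi(|\partial\Omega^*|)$-type comparison, which is an identity because $|\Sigma|=|\partial\Omega^*|$.

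To prove the three-term inequality, I run the weak IMCF $\{\Omega_t\}$ from $\Omega$; since $\Omega$ is outward minimizing, $\Omega_0=\Omega$ and $|\Sigma_t|=e^t|\Sigma|$. I then show that
\[
Q(t)=\frac{\int_{\Sigma_t}\phi\,d\mu_t-\int_{\Omega_t}\psi\,dv-\Phi(|\Sigma_t|)}{\text{(normalizing factor)}}
\]
is monotone nonincreasing. For large $t$, the Huisken--Ilmanen asymptotics (in hyperbolic space: Shi--Gerhardt-type roundness of the weak flow, whose level sets become smooth and close to large geodesic spheres) give $\lim Q\geq0$. The $g,h$ auxiliary functions of Gu--Li--Wan provide the right normalization; this is also where the cases $n\geq4$ and $n=3$ differ.

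\textbf{Step 3: monotonicity along the weak flow (main obstacle).} The weak solution $w$ has $|\nabla w|=H$ only a.e. and may jump. I intend to derive monotonicity distributionally. Using the calibrated weak formulation, $\diver(\nabla w/|\nabla w|)=|\nabla w|$ weakly. The Gauss--Green formula for divergence-measure fields, applied to vector fields of the form $\phi(r)\nu$ on $\Omega_t\setminus\Omega_s$, expresses $\int_{\Sigma_t}\phi-\int_{\Sigma_s}\phi$ in terms of $\int\big(\phi' \langle\partial_r,\nu\rangle+\phi H\big)$. The Heintze--Karcher-type bound $\int\phi'\langle\partial_r,\nu\rangle\leq\dots$, or an estimate $\langle\partial_r,\nu\rangle\leq1$ combined with the sign of $\phi'$, then gives the monotonicity as an inequality between endpoints. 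Jump times are handled because the jumps replace $\Omega_t$ by its strictly minimizing hull, which preserves area and only increases the weighted integrals in the favorable direction provided $\phi$ is increasing and $\psi\geq0$. This is the delicate point: at the endpoint $t=0$, outward minimality is exactly what ensures that no initial jump occurs.

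\textbf{Step 4: rigidity.} If equality holds, then all inequalities in Step 3 are equalities for a.e.\ $t$. This forces $\langle\partial_r,\nu\rangle=1$ on $\Sigma$, so $\Sigma$ is a geodesic sphere centred at the origin.
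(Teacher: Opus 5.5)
\textbf{There is a genuine gap in Steps 2--3.} Your ``hyperbolic three-term inequality'' and the monotonicity of $Q(t)$ are never specified: $\psi$, $\Phi$ and the normalizing factor are left open. In $\HH^n$ there is no scaling to supply a normalization as in the Euclidean $Q_k$. If $\psi$ is meant to absorb the Rayleigh integrand $q=f'^2+(n-1)f^2/\lambda^2$ and $\Phi$ is the ball value, then at $|\Sigma|=|\partial\Omega^*|$ your inequality is just the target restated, so all the content sits in a monotonicity you do not prove.

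There is a further problem. The transport formula for $\int_{\Sigma_t}\phi$ (Lemma~\ref{lem:endpoint-transport}) only produces the volume density $\langle\nabla\phi,\nu\rangle\le|\phi'|$, never $q$. Controlling $\int_\Omega q$ requires comparing $\Omega$ with the ball of the same \emph{volume}. So the final comparison is not ``an identity because $|\Sigma|=|\partial\Omega^*|$''.

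The paper instead takes $f=g=|B_r|/|S_r|$ (your eigenfunction profile up to a constant; no modification outside a ball is needed) and splits the problem into three pieces.
(i) The weak flow is used only to show that the Gu--Li--Wan ratio $\mathcal M(t)=G/(|\Sigma_t|\,V\,h(J))$ is non-increasing with $\liminf_{t\to\infty}\mathcal M\ge1$. This uses $\diver(g\partial_r)=1$ (so $G\ge V$), the identity $h'(J_B)/h(J_B)=-(n-1)/V_B$, and mass transplantation $J(E)\ge J(B_r)$ for $|B_r|=|E|$. It gives $\int_\Sigma g\ge|\Sigma||\Omega|h(J(\Omega))$, hence by H\"older $\int_\Sigma g^2\ge|\Sigma||\Omega|^2h(J(\Omega))^2$.
(ii) A purely static estimate $h(J(\Omega))^2\ge\int_\Omega q\,dv/(g'(R)|B_R||S_R|^2)$ holds with $|B_R|=|\Omega|$. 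It follows from mass transplantation applied to the decreasing functions $\Phi$ ($n\ge5$) or $\Psi$ ($n=3,4$). This, not the flow normalization, is where the dimension enters.
(iii) Together these yield $|\Sigma|\sigma_1(\Omega)\le|S_R|\sigma_1(B_R)$. You then still need the isoperimetric inequality $R\le R_*$, where $|S_{R_*}|=|\Sigma|$, and the strict monotonicity of $r\mapsto|S_r|\sigma_1(B_r)$.

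Rigidity comes from step (iii): equality forces $R=R_*$. It does not come from $\langle\partial_r,\nu\rangle=1$ on $\Sigma$, which equality for almost every $t>0$ would not give you at $t=0$ anyway.

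\textbf{Your flow analysis also contains concrete errors.}
First, outward minimality does \emph{not} exclude an initial jump. The strictly minimizing hull $\Omega_0^+$ can be strictly larger than $\Omega$ unless $\Omega$ is strictly outward minimizing. The paper keeps the prescribed value at $t=0$ by applying Gauss--Green to $g\,\eta(u)\nu$, with $\nu$ the calibration field, and using that the normal trace $\tau$ of $\nu$ on $\Sigma$ satisfies $\tau\le1$.
Second, your jump heuristic says nothing about the nonlinear denominator $V\,h(J)$. The paper mollifies in time and uses that $J_B$ is convex as a function of ball volume, so Jensen preserves $J_\varepsilon\ge J_B(V_B^{-1}(V_\varepsilon))$. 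This gives $(V_\varepsilon h(J_\varepsilon))'\ge h(J_\varepsilon)C_\varepsilon'$, with $C(t)=\int_{\Omega_t}g'\,dv$, across all jumps including $t=0$.
Third, your large-time step relies on roundness of the weak flow in $\HH^n$, which is not available; even for smooth star-shaped data the rescaled flow need not become round. The ratio $\mathcal M$ is designed so that only $\operatorname{ess\,inf}_{\Sigma_t}r\to\infty$ matters. That follows from comparison with an expanding geodesic sphere inside $\Omega$ together with $g(r)\to1/(n-1)$.
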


We now describe the proof and the main new points.  In both the Euclidean and hyperbolic arguments, the monotonicity along smooth inverse mean curvature flow is replaced by endpoint distributional monotonicity along the weak flow.  This use of weak IMCF is in the same spirit as its applications to Minkowski-type inequalities for outward-minimizing hypersurfaces.  In Euclidean space, Huisken \cite{Huisken2009Lecture} used weak IMCF to prove the corresponding Minkowski inequality, see also Freire--Schwartz \cite{FS2014}.  Agostiniani--Fogagnolo--Mazzieri \cite{AFM2022} later gave a nonlinear potential theoretic approach.  In hyperbolic space, Harvie \cite{Harvie2026} proved weak-IMCF Minkowski-type inequalities for outward-minimizing domains in dimensions $3\leq n\leq7$.  The measure argument below applies in every dimension $n\geq3$.

The main difficulty is to pass from smooth inverse mean curvature
flow to weak inverse mean curvature flow without losing the
prescribed initial data. After extending $u$ by zero on $\Omega$,
the weak flow admits, for $t>0$, two natural choices of level-set
domains,
\begin{equation*}
    \Omega_t=\{u<t\},
    \qquad
    \Omega_t^+=\operatorname{int}\{u\leq t\}.
\end{equation*}
These sets may differ at a jump time. In particular, the
right-hand limit at $t=0$ may be the strictly minimizing hull
$\Omega_0^+$ rather than the prescribed domain $\Omega$. This
distinction is harmless in some geometric inequalities, but it
matters here because the Steklov estimates are formulated in terms
of the original boundary $\partial\Omega$.

The key device in this paper is the calibrated formulation of weak IMCF \cite[Section~3]{HI2001}.  The calibration is an $L^\infty$ divergence-measure field.  Applying the Gauss--Green formula to a weighted multiple of this field produces a normal trace on the prescribed initial hypersurface.  The trace has the sign needed for monotonicity, and the contribution from the set $\{u=0\}\setminus\overline\Omega$ is included in a nonnegative defect measure.  Thus the distributional identities are written on $[0,\infty)$ with the endpoint value computed on $\Sigma$, not on $\partial\Omega_0^+$. No strict outward-minimizing hypothesis is used.

In the Euclidean case, the endpoint transport formula gives the defect-measure identity \eqref{eq:F-defect-measure} behind Proposition \ref{prop:monotonicity}.  In the hyperbolic case, it gives a measure inequality \eqref{eq:hyp-measure-transport} for $G(t)=\int_{\Sigma_t}g\,d\mu_t$.  We combine this inequality with a time-mollification argument for the denominator of the Gu--Li--Wan \cite{GuLiWan2025} functional
\begin{equation}\label{s1.Mt}
    \mathcal M(t)=\frac{1}{|\Sigma_t|}
    \frac{\int_{\Sigma_t}g\,d\mu_t}
        {|\Omega_t|h(\int_{\Omega_t}\frac{\lambda'g}{\lambda}\,dv)}.
\end{equation}
The weighted ball integral is convex as a function of the ball volume. Jensen's inequality therefore preserves the mass-transplantation estimate under time mollification, while the
measure formulation automatically includes all jump contributions. This yields monotonicity of $ \mathcal M(t)$ along weak IMCF in hyperbolic space. The remaining hyperbolic estimates are the radial mass-transplantation inequalities of Gu--Li--Wan \cite{GuLiWan2025} and the low-dimensional comparison from Gu's thesis \cite{GuThesis2026}.

The paper is organized as follows.  Section \ref{sec.2} recalls the weak inverse mean curvature flow facts used later and develops the calibrated endpoint transport formula.  Section \ref{sec.3} proves the Euclidean weighted three-term inequality and then Theorem \ref{thm:weinstock}.  Section \ref{sec.4} proves the hyperbolic weak-flow monotonicity and then Theorem \ref{thm:hyp-outward-weinstock}.

\section{Weak inverse mean curvature flow}\label{sec.2}

We recall the weak inverse mean curvature flow in the form needed below.  Huisken--Ilmanen \cite{HI2001} formulated the weak flow on general ambient Riemannian manifolds.  In this paper the ambient manifold $M$ is either $\R^n$ or $\HH^n$.  For a finite-perimeter set $E$ we write $P(E)=\calH^{n-1}(\partial^*E)$. This is the reduced-boundary representation of the perimeter.  See \cite[Theorem~15.9]{Maggi2012}.  All integrals over weak level-set boundaries are taken over the reduced boundary.

Let $\Omega\subset M$ be a bounded domain with smooth boundary $\Sigma$.  A proper function
\begin{equation*}
    u\in C^0(M\setminus\Omega)
      \cap W_{\mathrm{loc}}^{1,\infty}(M\setminus\overline\Omega)
\end{equation*}
is a weak solution of inverse mean curvature flow with initial data $\Sigma$ if its continuous boundary trace is zero on $\Sigma$, if $u\to\infty$ at infinity, and if the sublevel sets
\begin{equation*}
    \Omega_t:=\Omega\cup\{x\in M\setminus\overline\Omega:u(x)<t\},
    \qquad
    \Sigma_t:=\partial^*\Omega_t
\end{equation*}
solve
\begin{equation}\label{eq:level-set-imcf}
    \diver_M\left(\frac{\nabla u}{|\nabla u|}\right)=|\nabla u|
\end{equation}
in the variational sense of Huisken--Ilmanen.  If $u$ is smooth and $|\nabla u|\neq0$, then \eqref{eq:level-set-imcf} is equivalent to the classical inverse mean curvature flow \cite{Gerhardt1990,Gerhardt2011,Urbas1990}
\begin{equation*}
    \partial_t X=\frac{1}{H}\nu.
\end{equation*}

The explicit expanding-sphere solutions in $\R^n$ and $\HH^n$ are proper weak subsolutions.  The weak existence theorem \cite[Theorem~3.1]{HI2001} therefore gives a proper weak solution for every smooth bounded initial domain in the settings used here. The solution obtained by this construction satisfies
$u\geq0$ on $M\setminus\Omega$. Moreover, the boundary gradient estimate in the proof of \cite[Theorem~3.1]{HI2001}, together with the continuous zero trace of $u$ on $\Sigma$, implies that the extension obtained by setting $u=0$ on $\Omega$ belongs to $W_{\mathrm{loc}}^{1,\infty}(M)$. We continue to denote this extension by $u$. For $t>0$, the sets $\Omega_t$ are outward minimizing.  The right-continuous representatives
\begin{equation*}
    \Omega_t^+:=\operatorname{int}\bigl(\Omega\cup\{x\in M\setminus\overline\Omega:u(x)\leq t\}\bigr)
\end{equation*}
are strictly outward minimizing by the minimizing hull property \cite[Minimizing Hull Property~1.4]{HI2001}.  A jump occurs when $\Omega_t$ and $\Omega_t^+$ differ by a set of positive measure.

The monotonicity of the sublevel sets gives
\begin{equation*}
    \chi_{\Omega_s}\longrightarrow\chi_{\Omega_t}
    \quad\text{in }L^1_{\mathrm{loc}}\text{ as }s\nearrow t,
\end{equation*}
and
\begin{equation*}
    \chi_{\Omega_s}\longrightarrow\chi_{\Omega_t^+}
    \quad\text{in }L^1_{\mathrm{loc}}\text{ as }s\searrow t.
\end{equation*}
At $t=0$, the right limit may be the strictly minimizing hull $\Omega_0^+$ rather than the prescribed domain $\Omega$.  We set $\Omega_0=\Omega$ and $\Sigma_0=\Sigma$ throughout.  Every distributional identity below contains an explicit endpoint term at $t=0$ computed on the prescribed initial domain.

If the initial domain is outward minimizing, the exponential area law \cite[Lemma~1.6]{HI2001} gives
\begin{equation}\label{eq:area-growth}
    P(\Omega_t)=e^tP(\Omega),
    \qquad t\geq0.
\end{equation}
The equality at $t=0$ uses the prescribed initial trace.  We write $|\Sigma_t|=P(\Omega_t)$.

The proper weak solution obtained through the elliptic-regularization construction in \cite[Theorem 3.1]{HI2001} carries the corresponding calibration. More precisely, there exists a measurable vector field $\nu$ on $M\setminus\overline\Omega$ such that (see \cite[(3.15)]{HI2001})
\begin{equation}\label{eq:calibration}
    |\nu|\leq1,
    \qquad
    \langle\nabla u,\nu\rangle=|\nabla u|
    \quad\text{a.e. in }M\setminus\overline\Omega,
\end{equation}
and
\begin{equation}\label{eq:calibration-div}
    \int_{M\setminus\overline\Omega}\langle\nabla\phi,\nu\rangle\,dv
    =-\int_{M\setminus\overline\Omega}\phi|\nabla u|\,dv,
    \qquad
    \phi\in C_c^1(M\setminus\overline\Omega).
\end{equation}
Thus $\diver\nu=|\nabla u|$ in distributions.  At regular points of a level set, $\nu=\nabla u/|\nabla u|$ is the outward unit normal.

We use one endpoint convention throughout the paper.  If $\Phi\in L^1_{\mathrm{loc}}([0,\infty))$ has the prescribed value $\Phi(0)$, let $D_t\Phi$ denote the distributional derivative on $\mathbb R$ of its constant extension to $(-\infty,0)$, obtained by setting $\Phi(t)=\Phi(0)$ for $t<0$.  Then every $\eta\in C_c^1(\mathbb R)$ satisfies
\begin{equation}\label{eq:endpoint-derivative-convention}
    \langle D_t\Phi,\eta\rangle
    =-\int_0^\infty\Phi(t)\eta'(t)\,dt-\eta(0)\Phi(0).
\end{equation}
Here and below, $\eta\in C_c^1([0,\infty))$ means that $\eta$ is the restriction to $[0,\infty)$ of some function $\widetilde{\eta}\in C_c^1(\mathbb R)$. In particular, $\eta(0)$ is not required to vanish. If, in addition, the right trace $\Phi(0+)$ exists and $D_t\Phi$ is a Radon measure, then
\begin{equation*}
    D_t\Phi(\{0\})=\Phi(0+)-\Phi(0).
\end{equation*}
Thus the difference between the prescribed endpoint value and the positive-time right trace is explicitly retained at $t=0$.

All distributional identities obtained from this convention are identities on $\mathbb R$.  When a density such as $A(t)\,dt$ is defined only for $t>0$, it means $A(t)\mathbf 1_{(0,\infty)}(t)\,dt$ on $\mathbb R$.

\subsection{An endpoint Gauss--Green formula}\label{subsec:endpoint-GG}

The next lemma is the basic endpoint identity used in both ambient geometries.  The normal trace is understood in the sense of divergence-measure fields.  We use the Gauss--Green formula and the normal trace from \cite[Theorem~2.2]{ChenFrid1999}, and the product rule from \cite[Theorem~3.1]{ChenFrid1999}.  These results are local, and the coordinate reduction below gives the corresponding formula on a smooth
Riemannian manifold.  For a recent intrinsic formulation on regular
domains in metric measure spaces, including the Gauss--Green formula
and the sharp normal-trace estimate, see \cite[Theorem~3.6]{GornyMazon2024}.

\begin{lemma}\label{lem:endpoint-transport}
Let $f\in W^{1,\infty}_{\mathrm{loc}}(M)$ be nonnegative.  For almost every $t>0$ set
\begin{equation*}
    A_f(t):=\int_{\partial^*\Omega_t}f\,d\calH^{n-1},
    \qquad
    A_f(0):=\int_\Sigma f\,d\mu.
\end{equation*}
Then $A_f\in L^1_{\mathrm{loc}}([0,\infty))$.  There is a normal trace
\begin{equation*}
    \tau:=\operatorname{Tr}_\Sigma(\nu\cdot\nu_\Omega),
    \qquad |\tau|\leq1,
\end{equation*}
such that every $\eta\in C_c^1([0,\infty))$ satisfies
\begin{align}
    -\int_0^\infty\eta'(t)A_f(t)\,dt-\eta(0)A_f(0)
    =&\int_0^\infty\eta(t)A_f(t)\,dt 
      +\int_{M\setminus\overline\Omega}\eta(u)\langle\nabla f,\nu\rangle\,dv \notag\\
    &+\eta(0)\int_\Sigma f(\tau-1)\,d\mu.\label{eq:endpoint-transport-exact}
\end{align}
Consequently, if $\eta\geq0$, then
\begin{equation}\label{eq:endpoint-transport-ineq}
    -\int_0^\infty\eta'(t)A_f(t)\,dt-\eta(0)A_f(0)
    \leq
    \int_0^\infty\eta(t)A_f(t)\,dt
    +\int_{M\setminus\overline\Omega}\eta(u)\langle\nabla f,\nu\rangle\,dv.
\end{equation}
\end{lemma}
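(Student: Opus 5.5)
The plan is to test the calibration field against the weighted function $\Psi(x):=\zeta(u(x))f(x)$, with $\zeta(s):=\int_s^\infty\eta(r)\,dr$. This $\zeta$ is $C^2$, vanishes for large $s$, and satisfies $\zeta'=-\eta$. Since $u$ is proper, the function $\Psi$ lies in $W^{1,\infty}$ and has compact support in $M$.

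\emph{Coarea and integrability.} I first use the coarea formula together with $|\nabla u|=H$ on level sets. For a.e.\ $t>0$ the set $\partial^*\Omega_t$ agrees $\calH^{n-1}$-a.e.\ with $\{u=t\}\cap\{\nabla u\ne0\}$, which gives
\begin{equation*}
    \int_{M\setminus\overline\Omega}\eta'(u)f|\nabla u|\,dv=\int_0^\infty\eta'(t)A_f(t)\,dt
\end{equation*}
(and the analogous identity with $\eta$). Local integrability of $A_f$ follows from the same formula, since $u$ is locally Lipschitz and $f$ is locally bounded. Alternatively, it follows from the area law \eqref{eq:area-growth}, or from $P(\Omega_t)\le P(\Omega_t^+)$ bounds.

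\emph{Gauss--Green on $M\setminus\overline\Omega$.} By \eqref{eq:calibration-div}, $\nu$ is an $L^\infty$ divergence-measure field on $U:=M\setminus\overline\Omega$ with $\diver\nu=|\nabla u|\,dv\ge0$. Since $\partial U=\Sigma$ is smooth, the Chen--Frid theory \cite[Theorem~2.2]{ChenFrid1999} provides a normal trace $\tau$ on $\Sigma$ with $|\tau|\le\|\nu\|_\infty\le1$. I orient $\tau$ via $\nu_\Omega$, the outward normal of $\Omega$, which is the inward normal of $U$. The Gauss--Green formula for the Lipschitz, compactly supported $\Psi$ (product rule \cite[Theorem~3.1]{ChenFrid1999}) then reads
\begin{equation*}
    \int_U\langle\nabla\Psi,\nu\rangle\,dv+\int_U\Psi|\nabla u|\,dv=-\int_\Sigma\Psi\,\tau\,d\mu .
\end{equation*}
The sign of the right-hand side must be fixed carefully; it comes from the inward orientation. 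On $\Sigma$ we have $u=0$, so $\Psi=\zeta(0)f$.

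\emph{Expanding the terms.} Next I expand $\nabla\Psi=-\eta(u)f\nabla u+\zeta(u)\nabla f$ and use $\langle\nabla u,\nu\rangle=|\nabla u|$ from \eqref{eq:calibration}. This yields $\int_U[-\eta(u)f|\nabla u|+\zeta(u)f|\nabla u|]\,dv+\int_U\zeta(u)\langle\nabla f,\nu\rangle\,dv=-\zeta(0)\int_\Sigma f\tau\,d\mu$. Applying coarea to the first two terms converts them into $\int_0^\infty(\zeta-\eta)A_f\,dt$.

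That formula carries $\zeta$ where the target carries $\eta$. The fix is to run the whole argument with $\eta$ replaced by $-\eta'$ from the start, i.e.\ to take $\zeta=\eta$ directly: use $\Psi=\eta(u)f$, so that $\nabla\Psi=\eta'(u)f\nabla u+\eta(u)\nabla f$. The identity then becomes
\begin{equation*}
    \int_0^\infty\eta'A_f\,dt+\int_U\eta(u)\langle\nabla f,\nu\rangle\,dv+\int_0^\infty\eta A_f\,dt=-\eta(0)\int_\Sigma f\tau\,d\mu,
\end{equation*}
up to the trace sign convention. Rearranging gives
\begin{equation*}
    -\int_0^\infty\eta'A_f\,dt-\eta(0)A_f(0)=\int_0^\infty\eta A_f\,dt+\int_U\eta(u)\langle\nabla f,\nu\rangle\,dv+\eta(0)\int_\Sigma f(\tau-1)\,d\mu,
\end{equation*}
which is \eqref{eq:endpoint-transport-exact}. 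The inequality \eqref{eq:endpoint-transport-ineq} then follows from $f\ge0$, $\eta(0)\ge0$ and $\tau\le1$.

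\emph{Main obstacle.} The delicate step is the rigorous Gauss--Green application. Three points need care. First, transferring Chen--Frid's Euclidean statement to $M$ through local charts and a partition of unity, keeping track of the Riemannian volume and normal. Second, justifying the product rule for the Lipschitz $\Psi$, which is not $C^1$, by mollifying $\Psi$ and passing to the limit using $\diver\nu\in L^1_{\mathrm{loc}}$ and the weak-$*$ convergence. Third, checking that the trace sign convention gives $\tau\cdot f$ with the orientation $\nu_\Omega$. For the third point I would test on the smooth case: there $\nu=\nu_\Omega$ on $\Sigma$, so $\tau=1$ and the boundary term vanishes, matching the classical transport formula.
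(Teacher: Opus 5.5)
Your final argument is correct and is essentially the paper's proof: you test the calibration against $\Psi=\eta(u)f$, which is the same as applying the Chen--Frid Gauss--Green formula to $X=f\eta(u)\nu$ on $M\setminus\overline\Omega$. As in the paper, you then use $\langle\nabla u,\nu\rangle=|\nabla u|$, the coarea formula and the trace bound $|\tau|\le 1$, with the sign of the $\tau$ term fixed by the inward normal $-\nu_\Omega$. The opening detour through $\zeta(s)=\int_s^\infty\eta$ does not lead to the stated identity and can simply be deleted.
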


\begin{proof}
Since $u$ is proper and $\eta$ has compact support, the vector field
\begin{equation*}
    X:=f\eta(u)\nu
\end{equation*}
has compact support in the closure of $M\setminus\overline\Omega$.  Equations \eqref{eq:calibration} and \eqref{eq:calibration-div}, together with the product rule for divergence-measure fields, give
\begin{equation}
    \diver X
    =\eta(u)\langle\nabla f,\nu\rangle
     +f\bigl(\eta'(u)+\eta(u)\bigr)|\nabla u|
     \label{equ-divX}
\end{equation}
as an identity of distributions on $M\setminus\overline\Omega$.  In particular, $\diver X$ is represented there by a locally integrable function.

We next explain why the Gauss--Green formula applies in the Riemannian
setting. Choose a bounded smooth open set $U$ containing $\overline\Omega$ and the support of $X$, with $X=0$ near $\partial U$.  In a coordinate chart $y=(y^1,\ldots,y^n)$, write $X=X^i\partial_i$ and define the Euclidean coordinate field
\begin{equation*}
    Y^i:=\sqrt{\det(g_{ab})}\,X^i.
\end{equation*}
Then
\begin{equation*}
    \diver_{\mathbb R^n}Y\,dy
    =\diver_gX\,dv_g.
\end{equation*}
Thus the Euclidean divergence-measure theory in
\cite[Theorems~2.2 and~3.1]{ChenFrid1999} applies to $Y$ in every chart.  Under this coordinate change, its boundary flux becomes the intrinsic Riemannian flux $\langle X,\nu_E\rangle_g\,d\calH_g^{n-1}$, where $\nu_E$ is the measure-theoretic outward unit normal of the set under consideration.  A finite coordinate cover and a partition of unity therefore give the Gauss--Green formula on $U\setminus\overline\Omega$.

The same coordinate argument applied to the calibration field $\nu$ gives its normal trace on $\Sigma$.  This does not assert that $\nu$ has a pointwise boundary value. It means that the flux of $\nu$ through $\Sigma$, taken from the side $M\setminus\overline\Omega$, is well defined by the Gauss--Green formula.  With $\nu_{\Omega}$ denoting the unit normal pointing out of $\Omega$, write
\begin{equation*}
    \tau=\operatorname{Tr}_\Sigma(\nu\cdot\nu_\Omega).
\end{equation*}

Using the normal exponential deformation of $\Sigma$ in the weak-star characterization of the normal
trace in \cite[Theorem~2.2(iii)]{ChenFrid1999} and the bound
$|\nu|\leq1$, we obtain
\begin{equation}\label{eq:normal-trace-bound}
    \|\tau\|_{L^\infty(\Sigma)}
    \leq
    \|\nu\|_{L^\infty(U\setminus\overline\Omega)}
    \leq1.
\end{equation}
Since $\eta(u)$ is locally Lipschitz and has trace
$\eta(0)$ on $\Sigma$, the product rule and the characterization of the normal trace therefore give
\begin{equation*}
    \operatorname{Tr}_\Sigma(X\cdot\nu_\Omega)
    =
    f\eta(0)\operatorname{Tr}_\Sigma(\nu\cdot\nu_\Omega)
    =
    f\eta(0)\tau.
\end{equation*} 
The flux across $\partial U$ is zero.  Since the outer unit normal of $U\setminus\overline\Omega$ along $\Sigma$ is $-\nu_\Omega$, the Gauss--Green formula yields
\begin{equation}\label{s2.pf1}
    \int_{M\setminus\overline\Omega}\diver X\,dv
    =-\eta(0)\int_\Sigma f\tau\,d\mu.
\end{equation}

For each $T>0$, properness of $u$ implies that $\{0\leq u\leq T\}$ is compact. Since $f$ and $|\nabla u|$ are locally bounded, the coarea formula gives
\begin{equation*}
    \int_0^T A_f(t) \dd t = \int_{\{0<u<T\}} f |\nabla u| \dd v<\infty.
\end{equation*}
Consequently $A_f\in L_{\mathrm{loc}}^1([0,\infty))$. Since $u\in W_{\mathrm{loc}}^{1,\infty}(M)$ and
$\Omega_t=\{u<t\}$ for every $t>0$, applying the coarea formula to \eqref{equ-divX} gives
\begin{align}\label{s2.pf2}
 \int_{M\setminus\overline\Omega}\diver X\,dv
   =&  \int_{M\setminus\overline\Omega}\eta(u)\langle\nabla f,\nu\rangle \,dv+
    \int_{M\setminus\overline\Omega}
        f\bigl(\eta'(u)+\eta(u)\bigr)|\nabla u|\,dv\nonumber\\
    =&\int_{M\setminus\overline\Omega}\eta(u)\langle\nabla f,\nu\rangle \,dv+
    \int_0^\infty
        (\eta'(t)+\eta(t))
        \left(
            \int_{\partial^*\Omega_t}
                f\,d\mathcal H^{n-1}
        \right)dt \nonumber\\
    =&\int_{M\setminus\overline\Omega}\eta(u)\langle\nabla f,\nu\rangle \,dv+
    \int_0^\infty
        (\eta'(t)+\eta(t))A_f(t)\,dt.
\end{align}
Properness of $u$ and the local boundedness of $f$ ensure that all the integrals are finite. The set $\{u=0\}\setminus\overline\Omega$ causes no additional coarea term, because $|\nabla u|=0$ almost everywhere on this set. The coarea formula determines $A_f(t)$ only for almost every $t>0$. It does not determine the separately prescribed endpoint value $A_f(0)=\int_\Sigma f\,d\mu$.

Combining \eqref{s2.pf1} and \eqref{s2.pf2} and rearranging gives \eqref{eq:endpoint-transport-exact}.  Since $f\geq0$ and $\tau\leq1$ by \eqref{eq:normal-trace-bound}, the boundary defect term is non-positive
for every nonnegative $\eta$.  Hence
\eqref{eq:endpoint-transport-ineq} follows.
\end{proof}

\section{The Euclidean case}\label{sec.3}

In this section, we prove the Euclidean Weinstock inequality.  The first step is a weak-flow proof of the weighted three-term inequality.  We apply the endpoint Gauss--Green formula from Lemma \ref{lem:endpoint-transport}, combine it with a finite-perimeter divergence estimate, and obtain monotonicity of a normalized quantity. The final subsection applies the case $k=1$ to the Steklov variational characterization.

Throughout this section, we fix an origin and let $r(x):=|x|$ denote the Euclidean distance from it. Away from the origin, we write $\partial_r:=\nabla r=\frac{x}{r}$. Whenever $\partial_r$ occurs in an almost-everywhere identity, its value at the origin may be chosen arbitrarily. We reserve $D$ for
distributional derivatives.

\subsection{The weighted three-term inequality}

\begin{proposition}\label{prop:weighted-three-term}
Let $\Om\subset\R^n$, $n\geq3$, be a bounded outward-minimizing domain with smooth boundary $\Sig=\partial\Om$.  Fix an origin in $\R^n$ and set $r=|x|$.  Then for every $k>0$,
\begin{equation}\label{eq:weighted-main}
    \int_\Sig r^k\,d\mu
    \geq
    \frac{n-1}{n-1+k}|\Sn^{n-1}|^{-\frac{k}{n-1}}
    |\Sig|^{\frac{n-1+k}{n-1}}
    + k\int_\Om r^{k-1}\,dx.
\end{equation}
Equality holds if and only if $\Sig$ is a round sphere centered at the chosen origin.
\end{proposition}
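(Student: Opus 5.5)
Following the strategy announced in the introduction, I will show that
\[
    Q(t):=|\Sigma_t|^{-\frac{n-1+k}{n-1}}F(t),\qquad
    F(t):=\int_{\Sigma_t}r^k\,d\mu_t-k\int_{\Omega_t}r^{k-1}\,dx ,
\]
is nonincreasing along the weak inverse mean curvature flow. I will then compare $Q(0)$ with the large-time limit of $Q$, which equals the constant $\frac{n-1}{n-1+k}|\Sn^{n-1}|^{-\frac{k}{n-1}}$.

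The first step is the distributional monotonicity, using the endpoint convention \eqref{eq:endpoint-derivative-convention}.

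\begin{itemize}
\item \emph{Area term.} Apply Lemma \ref{lem:endpoint-transport} with $f=r^k$. For $k\geq1$ this $f$ is locally Lipschitz. For $0<k<1$ I would use $f=(r^2+\epsilon^2)^{k/2}$ and let $\epsilon\to0$. Since $\nabla f=kr^{k-1}\partial_r$, the lemma gives
\[
D_tA_f\leq A_f\,dt+(\text{the measure }\eta\mapsto k\textstyle\int_{\R^n\setminus\overline\Omega}\eta(u)r^{k-1}\langle\partial_r,\nu\rangle\,dx).
\]
\item \emph{Volume term.} For $V(t)=\int_{\Omega_t}r^{k-1}dx$, the coarea formula gives
\[
D_tV=\Bigl(\int_{\Sigma_t}r^{k-1}|\nabla u|^{-1}\Bigr)dt+(\text{jump part}),
\]
where the jump part is $\ge0$. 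Equivalently, $\langle D_tV,\eta\rangle=-\int\eta'V-\eta(0)V(0)=\int_{\R^n\setminus\overline\Omega}\eta(u)r^{k-1}\,dx$, using $\Omega_0=\Omega$ and the set $\{u=0\}\setminus\overline\Omega$.
\item \emph{Combining.} This second representation matches the lemma's term exactly. We obtain
\[
\langle D_tF,\eta\rangle\leq\int_0^\infty\eta A_f\,dt+k\int_{\R^n\setminus\overline\Omega}\eta(u)r^{k-1}\bigl(\langle\partial_r,\nu\rangle-1\bigr)\,dx .
\]
The last integral is $\leq0$ since $|\nu|\leq1$.
\item \emph{Comparison with $F$.} I still need $\int\eta A_f\le\frac{n-1+k}{n-1}\int\eta F$. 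I expect this is \emph{not} automatic. I would instead refine the defect using the divergence identity $\diver(r^{k}\partial_r\cdots)$: on each $\Omega_t$, the finite-perimeter divergence theorem gives
\[
\int_{\Sigma_t}r^{k}\langle\partial_r,\nu_{\Omega_t}\rangle=(n-1+k)\int_{\Omega_t}r^{k-1}dx .
\]
Hence $A_f-kV\ge\frac{n-1}{n-1+k}A_f+\frac{k}{n-1+k}\int_{\Sigma_t}r^k(1-\langle\partial_r,\nu\rangle)$. Comparing with the lemma's defect (which is of the same form through coarea) should give $D_tF\le\frac{n-1+k}{n-1}F\,dt$ as measures on $[0,\infty)$, including the atom at $0$.
\end{itemize}

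Together with \eqref{eq:area-growth}, which says $|\Sigma_t|=e^t|\Sigma|$ is smooth, the product rule then yields $D_tQ\le0$. In particular $Q(0)\ge Q(t)$ for a.e.\ $t>0$, and $Q(0)$ is computed on the prescribed $\Sigma$. Here I need $F>0$, or I can simply work with the linear inequality multiplied by $e^{-\frac{n-1+k}{n-1}t}$.

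The second step is the asymptotics. By Huisken--Ilmanen, $\Omega_t$ becomes round after rescaling by $e^{t/(n-1)}$: the blowdown converges to a sphere of area $|\Sigma_t|$ and the center drift is negligible. Hence
\[
\int_{\Sigma_t}r^k\sim|\Sn^{n-1}|^{-\frac{k}{n-1}}|\Sigma_t|^{\frac{n-1+k}{n-1}},\qquad
k\int_{\Omega_t}r^{k-1}\sim\frac{k}{n-1+k}(\text{same}),
\]
so $Q(t)\to\frac{n-1}{n-1+k}|\Sn^{n-1}|^{-\frac{k}{n-1}}$. This gives \eqref{eq:weighted-main}. Justifying these limits uses the Huisken--Ilmanen blowdown lemma together with Hausdorff convergence of the level sets; the weight $r^k$ then converges uniformly after scaling.

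For equality, all defects must vanish: $\langle\partial_r,\nu\rangle=1$ a.e.\ on $\Sigma_t$, and $\tau=1$ on $\Sigma$. Then $\Sigma_t$ is a sphere centered at the origin for a.e.\ $t$, and letting $t\searrow0$ (no jump, since $\tau=1$ and the defect vanishes) gives that $\Sigma$ is such a sphere.

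The main obstacle is the comparison step in the first paragraph: producing the sharp differential inequality $D_tF\le\frac{n-1+k}{n-1}F$ with all jump and endpoint atoms having the correct sign, rather than the lossy bound by $A_f$. I would first try to get it by combining the exact identity \eqref{eq:endpoint-transport-exact} with the divergence identity applied on each sublevel set.
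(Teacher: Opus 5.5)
Your argument for the inequality is the paper's, and it is already complete. The step you flag as the main obstacle is not one. For nonnegative $\eta$, drop both nonnegative defects: the volume term $k\int\eta(u)r^{k-1}(1-\langle\partial_r,\nu\rangle)\,dx$ and the endpoint term $\eta(0)\int_\Sig r^k(1-\tau)\,d\mu$. This gives $\langle D_tF,\eta\rangle\le\int_0^\infty\eta A\,dt$. Your own identity $F=\frac{n-1}{n-1+k}A+\frac{k}{n-1+k}\int_{\Sig_t}r^k(1-\langle\partial_r,\nu_{\Om_t}\rangle)\,d\mu_t$ then gives $A\le\frac{n-1+k}{n-1}F$ for a.e.\ $t>0$, hence $\int_0^\infty\eta A\,dt\le\frac{n-1+k}{n-1}\int_0^\infty\eta F\,dt$. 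No coarea matching of the volume defect against the surface defect is needed. The atom at $t=0$ is harmless: $F\,dt$ has none, and the atom of $D_tF$ is $-\mathfrak e_k(\{0\})\le0$. This is exactly \eqref{eq:B-vs-A} combined with \eqref{eq:Q-defect}.

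The genuine gap is in the equality case, at ``no jump, since $\tau=1$ and the defect vanishes''. Vanishing of $\mathfrak e_k(\{0\})$ in \eqref{eq:Euclidean-defect} only gives $\tau=1$ on $\Sig$ and $\nu=\partial_r$ a.e.\ on $\{u=0\}\setminus\overline\Om$. It says nothing about that set being null. Outward minimality alone does not exclude $\Om_0^+\neq\Om$ either, since strictness is not assumed. Without this step, letting $t\searrow0$ only identifies $\Om_0^+$ as a centered ball, not $\Om$. The missing idea is to use the calibration equation. If $|\Om_0^+\setminus\Om|>0$, take a ball $U$ in the open set $\Om_0^+\setminus\overline\Om$ away from the origin. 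Then $u\equiv0$ on $U$, so \eqref{eq:calibration-div} gives $\diver\nu=|\nabla u|=0$ on $U$. But $\nu=\partial_r$ there and $\diver\partial_r=\frac{n-1}{r}>0$, a contradiction. There is also a smaller omission: the claim that $\nu_{\Om_t}=\partial_r$ a.e.\ on $\partial^*\Om_t$ forces a centered ball needs proof for finite-perimeter sets. The paper's Lemma \ref{lem:divergence} supplies it in two steps. First, $\diver(\chi_E Ax)=0$ for skew $A$, which gives rotation invariance. Second, the test fields $\psi(r)r^{1-n}\partial_r$ show the radial profile is monotone.
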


Fix $k>0$. For every $t>0$, set
\begin{equation*}
    B_k(t):=k\int_{\Om_t}r^{k-1}\,dx,
\end{equation*}
and, for almost every $t>0$, set
\begin{equation*}
    A_k(t):=\int_{\Sig_t}r^k\,d\mu_t,
    \qquad
    F_k(t):=A_k(t)-B_k(t).
\end{equation*}
At $t=0$, prescribe
\begin{equation*}
    A_k(0):=\int_{\Sig}r^k\,d\mu,
    \qquad
    B_k(0):=k\int_{\Om}r^{k-1}\,dx,
    \qquad
    F_k(0):=A_k(0)-B_k(0).
\end{equation*}
Whenever distributional derivatives in time are used, these functions are extended to $t<0$ by their prescribed values at $t=0$.

\begin{lemma}\label{lem:transport}
Let $\{\Om_t\}$ be the weak inverse mean curvature flow starting from a smooth bounded domain $\Om$.  Then $F_k\in L^1_{\mathrm{loc}}([0,\infty))$ and
\begin{equation*}
    D_tF_k\leq A_k\,dt
\end{equation*}
as an inequality of distributions on $\mathbb R$ under the endpoint convention above. Here $A_k\,dt$ is zero on $(-\infty,0)$.  Equivalently, for every nonnegative $\eta\in C_c^1([0,\infty))$,
\begin{equation}\label{eq:distribution-F}
    -\int_0^\infty\eta'(t)F_k(t)\,dt-\eta(0)F_k(0)
    \leq
    \int_0^\infty\eta(t)A_k(t)\,dt.
\end{equation}
More precisely, define
\begin{equation}\label{eq:Euclidean-defect-measure-definition}
\begin{split}
    \mathfrak e_k
    :=&{u}_{\#}\left(
      kr^{k-1}\bigl(1-\partial_r\cdot\nu\bigr)
      \mathcal L^n\mathbin{\llcorner}
      (\R^n\setminus\overline\Om)\right)\\
    &+
      \left(\int_\Sig r^k(1-\tau)\,d\mu\right)\delta_0,
\end{split}
\end{equation}
where $\tau$ is the normal trace from Lemma \ref{lem:endpoint-transport}.  This is a nonnegative Radon measure and
\begin{equation}\label{eq:F-defect-measure}
    D_tF_k=A_k\,dt-\mathfrak e_k.
\end{equation}
Equivalently, for every $\eta\in C_c([0,\infty))$,
\begin{align}
    \int_{[0,\infty)}\eta\,d\mathfrak e_k
    &=\int_{\R^n\setminus\overline\Om}
      \eta(u)kr^{k-1}\bigl(1-\partial_r\cdot\nu\bigr)\,dx \notag\\
    &\quad+\eta(0)\int_\Sig r^k(1-\tau)\,d\mu.\label{eq:Euclidean-defect}
\end{align}
\end{lemma}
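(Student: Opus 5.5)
We apply Lemma \ref{lem:endpoint-transport} with $f=r^k$ and compute the time derivative of $B_k$ by the coarea formula. The defect measure then appears as the difference of the two resulting identities.

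\emph{Step 1: the area term.} The weight $f=r^k$ is nonnegative and lies in $W^{1,\infty}_{\mathrm{loc}}(\R^n)$ for $k\geq1$. For $0<k<1$ it is only H\"older continuous at the origin, but $\nabla f=kr^{k-1}\partial_r$ is locally integrable since $k-1>-n$. In that case I would approximate by $f_\epsilon=(r^2+\epsilon^2)^{k/2}$ and pass to the limit by dominated convergence, using $|\nu|\le1$, the local integrability of $r^{k-1}$ and the boundedness of $\eta(u)$. Identity \eqref{eq:endpoint-transport-exact} then gives, for $\eta\in C^1_c([0,\infty))$,
\begin{equation*}
    -\int_0^\infty\eta' A_k\,dt-\eta(0)A_k(0)
    =\int_0^\infty\eta A_k\,dt+\int_{\R^n\setminus\overline\Om}\eta(u)kr^{k-1}\partial_r\cdot\nu\,dx+\eta(0)\int_\Sig r^k(\tau-1)\,d\mu .
\end{equation*}
Local integrability of $A_k$ is part of that lemma.

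\emph{Step 2: the volume term.} Since $u=0$ on $\Om$ and $u\ge0$, we have $\Om_t=\Om\cup\{0<u<t\}\cup(\{u=0\}\setminus\overline\Om)$ for $t>0$. Hence
\begin{equation*}
    B_k(t)=B_k(0)+\int_{\R^n\setminus\overline\Om}\mathbf 1_{\{u<t\}}kr^{k-1}\,dx ,
\end{equation*}
because $\Sigma$ is smooth and so $\partial\Om$ is Lebesgue-null. This shows $B_k\in L^1_{\mathrm{loc}}$. By Fubini, with $\int_0^\infty\eta'(t)\mathbf 1_{\{u(x)<t\}}\,dt=-\eta(u(x))$, we get
\begin{equation*}
    -\int_0^\infty\eta'B_k\,dt-\eta(0)B_k(0)=\int_{\R^n\setminus\overline\Om}\eta(u)kr^{k-1}\,dx .
\end{equation*}
In other words, $D_tB_k=u_\#(kr^{k-1}\mathcal L^n\llcorner(\R^n\setminus\overline\Om))$. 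This pushforward automatically includes the mass of $\{u=0\}\setminus\overline\Om$ at $t=0$ and any jump contributions.

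\emph{Step 3: combine.} Subtracting the identity of Step 2 from that of Step 1 gives
\begin{equation*}
    -\int_0^\infty\eta'F_k\,dt-\eta(0)F_k(0)=\int_0^\infty\eta A_k\,dt-\int_{[0,\infty)}\eta\,d\mathfrak e_k ,
\end{equation*}
with $\mathfrak e_k$ as in \eqref{eq:Euclidean-defect-measure-definition}. This is exactly \eqref{eq:F-defect-measure} in the form \eqref{eq:Euclidean-defect}.

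\emph{Step 4: nonnegativity and extension.} Nonnegativity of $\mathfrak e_k$ follows from $|\partial_r\cdot\nu|\le|\nu|\le1$ and from $\tau\le1$ in \eqref{eq:normal-trace-bound}. The measure $\mathfrak e_k$ is Radon because $u$ is proper, so $u^{-1}([0,T])$ is compact and carries finite mass of $r^{k-1}dx$. The identity was derived for $C^1_c$ test functions. Both sides are Radon-measure pairings once $A_k\,dt$ is read as a measure, so the identity extends to $\eta\in C_c([0,\infty))$ by density. Taking $\eta\ge0$ and dropping $\mathfrak e_k$ gives \eqref{eq:distribution-F}.

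The main obstacle is the regularity of $r^k$ at the origin for $k<1$, which Lemma \ref{lem:endpoint-transport} does not cover directly. The approximation in Step 1 should handle it. One needs the normal-trace term on $\Sigma$ to converge, which holds because $f_\epsilon\to r^k$ uniformly on compact sets.
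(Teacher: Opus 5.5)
Your proposal is correct and follows the paper's proof. It applies Lemma \ref{lem:endpoint-transport} to the regularized weight $(r^2+\delta^2)^{k/2}$ and passes to the limit by dominated convergence, using the bound $kr^{k-1}$, uniform convergence on $K_T$ and on $\Sigma$, and $|\nu|\le1$. It then computes $D_tB_k$ as the pushforward $u_\#\bigl(kr^{k-1}\mathcal L^n\llcorner(\R^n\setminus\overline\Om)\bigr)$ via Fubini, and subtracts.

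The only cosmetic difference is in how the regularization is used. The paper regularizes for every $k>0$, so a single limiting argument covers all cases. You apply the lemma directly when $k\ge1$, where $r^k$ is already locally Lipschitz, and regularize only for $0<k<1$; both choices work.
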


\begin{proof}
For $\delta>0$ define
\begin{equation*}
    f_\delta(x):=(r^2+\delta^2)^{k/2}.
\end{equation*}
The regularization removes the possible singularity of $\nabla r^k$ at the origin and allows a single argument for every $k>0$.  The function $f_\delta$ is smooth and nonnegative, so Lemma \ref{lem:endpoint-transport} applies to it.  We now justify the passage to the limit $\delta\downarrow0$.  Fix $\eta$ and choose $T>0$ such that the supports of $\eta$ and $\eta'$ are contained in $[0,T]$.  By the properness of $u$, the set
\begin{equation*}
    K_T:=\overline{\{x\in\R^n\setminus\overline\Om:0\leq u(x)\leq T\}}
\end{equation*}
is compact.  For $\zeta=\eta$ and $\zeta=\eta'$, the coarea formula gives
\begin{equation*}
    \int_0^\infty \zeta(t)A_{f_\delta}(t)\,dt
    =\int_{\R^n\setminus\overline\Om}
      \zeta(u)f_\delta|\nabla u|\,dx.
\end{equation*}
Since $f_\delta\to r^k$ uniformly on $K_T$ and $|\nabla u|$ is locally integrable, these two terms converge to the corresponding terms with $r^k$.  The endpoint term also converges because $f_\delta\to r^k$ uniformly on the smooth compact hypersurface $\Sig$.

It remains to pass to the limit in the gradient term.  On $K_T$,
\begin{equation*}
    \nabla f_\delta
    =kr(r^2+\delta^2)^{\frac{k}{2}-1}\partial_r
    \longrightarrow kr^{k-1}\partial_r
\end{equation*}
almost everywhere.  If $0<k<2$, then $|\nabla f_\delta|\leq kr^{k-1}$ away from $\{0\}$, and $r^{k-1}$ is locally integrable for every $k>0$.  If $k\geq2$, then $|\nabla f_\delta|$ is uniformly bounded on $K_T$.  Since $|\nu|\leq1$, dominated convergence gives
\begin{equation*}
    \int_{\R^n\setminus\overline\Om}
      \eta(u)\langle\nabla f_\delta,\nu\rangle\,dx
    \longrightarrow
    \int_{\R^n\setminus\overline\Om}
      \eta(u)kr^{k-1}\partial_r\cdot\nu\,dx.
\end{equation*}
Thus we may let $\delta\downarrow0$ in \eqref{eq:endpoint-transport-exact}. This gives
\begin{align}
    -\int_0^\infty\eta'(t)A_k(t)\,dt-\eta(0)A_k(0)
    &=\int_0^\infty\eta(t)A_k(t)\,dt 
      +\int_{\R^n\setminus\overline\Om}
       \eta(u)kr^{k-1}\partial_r\cdot\nu\,dx \notag\\
    &\quad+\eta(0)\int_\Sig r^k(\tau-1)\,d\mu.\label{eq:Ak-endpoint}
\end{align}

Since $u$ is proper and $kr(x)^{k-1}$ is locally integrable,
\begin{equation*}
    \int_0^\infty\int_{\mathbb R^n\setminus\Omega}
    |\eta'(t)|\mathbf 1_{\{u(x)<t\}}kr(x)^{k-1}\,dx\,dt
    \leq
    T\|\eta'\|_{L^\infty}\int_{K_T}kr(x)^{k-1}\,dx<\infty.
\end{equation*}
Hence Fubini's theorem applies. Since $\eta$ has compact support and
\begin{equation*}
    B_k(t)=B_k(0)
      +\int_{\R^n\setminus\overline\Om}
        \mathbf 1_{\{u<t\}}kr^{k-1}\,dx,
\end{equation*}
we have 
\begin{align}\label{eq:Bk-distribution}
 -\int_0^\infty\eta'(t)&B_k(t)\,dt-\eta(0)B_k(0)\nonumber\\
 =&-\int_0^\infty\eta'(t)\left(B_k(0)
      +\int_{\R^n\setminus\overline\Om}
        \mathbf 1_{\{u<t\}}kr^{k-1}\,dx\right)\,dt-\eta(0)B_k(0)\nonumber\\
=&-B_k(0)\int_0^\infty\eta'(t)dt-\eta(0)B_k(0)\nonumber\\
&\qquad -\int_0^\infty\eta'(t)\int_{\R^n\setminus\overline\Om}
        \mathbf 1_{\{u<t\}}kr^{k-1}\,dx \,dt\nonumber\\
 =&-\int_{\mathbb R^n\setminus\Omega}kr^{k-1}
   \int_0^\infty\eta'(t)\mathbf 1_{\{u(x)<t\}}\,dt\,dx\nonumber\\
 =&-\int_{\mathbb R^n\setminus\Omega}kr^{k-1}
   \int_{u(x)}^\infty\eta'(t)\,dt\,dx\nonumber\\
 =&\int_{\mathbb R^n\setminus\Omega}
   \eta(u)kr^{k-1}\,dx.
\end{align}
Subtracting \eqref{eq:Bk-distribution} from \eqref{eq:Ak-endpoint} gives \eqref{eq:F-defect-measure} with the measure in \eqref{eq:Euclidean-defect-measure-definition}.   The identity first holds for test functions in $C_c^1([0,\infty))$ and then for every function in $C_c([0,\infty))$ by density.  The measure is nonnegative because $|\nu|\leq1$ and $\tau\leq1$.  Equation \eqref{eq:distribution-F} follows.
\end{proof}

We next relate $A_k(t)$ and $B_k(t)$ by a divergence estimate.

\begin{lemma}\label{lem:divergence}
Let $E\subset\R^n$ be a finite-perimeter set with compact closure.  For every $k>0$,
\begin{equation}\label{eq:divergence-estimate}
    (n-1+k)\int_E r^{k-1}\,dx
    \leq
    \int_{\partial^*E}r^k\,d\calH^{n-1}.
\end{equation}
If $|E|>0$, equality holds if and only if $E$ agrees almost everywhere with a ball centered at the chosen origin. In particular, for almost every $t>0$ along the weak flow,
\begin{equation}\label{eq:B-vs-A}
    F_k(t)\geq\frac{n-1}{n-1+k}A_k(t).
\end{equation}
\end{lemma}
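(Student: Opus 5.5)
The plan is to apply the Gauss--Green formula for finite-perimeter sets, \cite[Theorem~1.1]{Maggi2012}-type, to the vector field $X=r^{k}\partial_r=r^{k-1}x$. Away from the origin, $\diver X=(n-1+k)r^{k-1}$. Moreover $|X|=r^k$ and $\langle X,\nu_E\rangle\leq r^k$ on $\partial^*E$, where $\nu_E$ is the measure-theoretic outer unit normal. Formally this gives
\begin{equation*}
    (n-1+k)\int_E r^{k-1}\,dx=\int_{\partial^*E}r^k\,\partial_r\cdot\nu_E\,d\calH^{n-1}\leq\int_{\partial^*E}r^k\,d\calH^{n-1},
\end{equation*}
which is \eqref{eq:divergence-estimate}.

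To handle the singularity at the origin for $0<k<1$, I would regularize exactly as in Lemma \ref{lem:transport}. Set $X_\delta:=(r^2+\delta^2)^{\frac{k-1}{2}}x$, which is smooth. Its divergence is $(r^2+\delta^2)^{\frac{k-3}{2}}\bigl(nr^2+n\delta^2+(k-1)r^2\bigr)$, and it converges to $(n-1+k)r^{k-1}$ pointwise. It is dominated by $C\,r^{k-1}$ for $k<1$ (and bounded on compacts for $k\geq1$), so dominated convergence applies on the bounded set $E$. Also $|X_\delta|\to r^k$ uniformly on compact sets, and $\calH^{n-1}(\partial^*E)<\infty$. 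Applying the smooth Gauss--Green formula on $E$ and letting $\delta\downarrow0$ then yields the inequality.

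For the equality case, suppose $|E|>0$ and equality holds. Then $\partial_r\cdot\nu_E=1$ for $\calH^{n-1}$-a.e.\ point of $\partial^*E$ with $r>0$, so $\nu_E=x/|x|$ a.e.\ on $\partial^*E$. I would convert this into a statement about $D\chi_E=-\nu_E\,\calH^{n-1}\llcorner\partial^*E$. In polar coordinates, the tangential (angular) components of $D\chi_E$ vanish and the radial component has a sign. I would then test against fields $\phi(x)\,\xi(\theta)$ that are tangent to spheres to show that $\chi_E(\rho\theta)$ is independent of $\theta$ for a.e.\ $\rho$. This is the step I expect to be the main obstacle. I would try to cite the characterization that a set whose distributional gradient is radial is a.e.\ radially symmetric, and otherwise prove it by mollifying $\chi_E$ with rotation-invariant kernels. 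Once radial symmetry is known, $\chi_E(\rho\theta)=g(\rho)$ with $g'\leq0$ in distributions, because the outer normal points outward, so $D\chi_E=-r^{-1}\ldots$ with the appropriate sign. Hence $g$ is nonincreasing and $E$ is a.e.\ a ball $B_R$ centered at the origin, with $R>0$ since $|E|>0$. Conversely, balls centered at the origin clearly attain equality.

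Finally, \eqref{eq:B-vs-A} follows for a.e.\ $t>0$. Each $\Om_t$ is a bounded finite-perimeter set, since $u$ is proper and $P(\Om_t)=e^tP(\Om)$ by \eqref{eq:area-growth}, and $\partial^*\Om_t=\Sig_t$. Then \eqref{eq:divergence-estimate} gives $B_k(t)\leq\frac{k}{n-1+k}A_k(t)$, so
\begin{equation*}
    F_k(t)=A_k(t)-B_k(t)\geq\Bigl(1-\frac{k}{n-1+k}\Bigr)A_k(t)=\frac{n-1}{n-1+k}A_k(t).
\end{equation*}
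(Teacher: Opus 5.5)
Your proposal is correct and follows the paper's proof essentially step for step: the same regularized field $X_\delta=(r^2+\delta^2)^{\frac{k-1}{2}}x$ with Gauss--Green and dominated convergence, then $\nu_E=\partial_r$ on $\partial^*E$, rotational symmetry of $E$, a nonincreasing radial profile, and finally application to $\Omega_t$. The step you flag as the main obstacle is closed in the paper without any mollification: take as tangential fields the rotation fields $Z_A(x)=Ax$ with $A$ skew-symmetric. Since $Z_A\cdot\partial_r=0$ and $\diver Z_A=0$, one gets $\diver(\chi_E Z_A)=0$. Hence $\frac{d}{dt}\int\chi_E\,\varphi\circ e^{-tA}\,dx=0$, so $\chi_E\circ e^{tA}=\chi_E$ a.e. The sign of the radial profile then comes from testing with the field $\psi(r)r^{1-n}\partial_r$.
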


\begin{proof}
Consider the smooth vector fields
\begin{equation*}
    X_\delta=(r^2+\delta^2)^{\frac{k-1}{2}}x.
\end{equation*}
The limiting field $r^{k-1}x$ has the desired divergence but may fail to be $C^1$ at the origin when $0<k<1$.  The field $X_\delta$ regularizes this singularity.  Since $E$ has compact closure, we may multiply $X_\delta$ by a smooth cutoff that equals one near $\overline E$.  The Gauss--Green theorem for finite-perimeter sets \cite[Theorem~15.9]{Maggi2012} then yields
\begin{equation}\label{s3.delta}
\int_E\diver X_\delta\,dx
=\int_{\partial^*E} X_\delta\cdot\nu_E\,d\calH^{n-1}.
\end{equation}
A direct computation gives
\begin{equation*}
    \diver X_{\delta} = n \left( r^2 + \delta^2 \right)^{\frac{k-1}{2}} + (k-1)\left( r^2 + \delta^2 \right)^{\frac{k-3}{2}}r^2.
\end{equation*}
Passing to the limit as $\delta\downarrow 0$ in \eqref{s3.delta} gives
\begin{equation}\label{eq:weighted-divergence-identity}
    (n-1+k)\int_E r^{k-1}\,dx
    =\int_{\partial^*E}r^{k-1}x\cdot\nu_E\,d\calH^{n-1}.
\end{equation}
Indeed, if $0<k<1$, then
\begin{equation*}
    |\diver X_\delta|  \leq (n+1) r^{k-1},
    \qquad
    |X_\delta|\leq r^k,
\end{equation*}
and $r^{k-1}$ is locally integrable in $\mathbb{R}^n$, while for $k\geq1$ both quantities are locally bounded uniformly in $\delta$.  Dominated convergence therefore applies to the volume term and the reduced-boundary term in \eqref{s3.delta}. Since $x\cdot\nu_E\leq r$, inequality \eqref{eq:divergence-estimate} follows.

Suppose equality holds and $|E|>0$.  Equation \eqref{eq:weighted-divergence-identity} implies
\begin{equation*}
    \nu_E=\partial_r
    \quad\text{for }\calH^{n-1}\text{-almost every point of }\partial^*E.
\end{equation*}
Indeed, the nonnegative integrand $r^{k-1}(r-x\cdot\nu_E)$ has zero integral. Recall that
\begin{equation*}
    D\chi_E=-\nu_E|D\chi_E|.
\end{equation*}
Then we have $D\chi_E=-\partial_r|D\chi_E|$, where $|D\chi_E|=\mathcal{H}^{n-1}\mathbin{\llcorner}\partial^*E$.  The value of $\partial_r$ at the origin is immaterial. We first prove that $E$ is rotationally invariant. Let $A$ be a skew-symmetric matrix and set $Z_A(x)=Ax$.  Since 
\begin{equation*}
    Z_A\cdot \partial_r=\frac{1}{r}Ax\cdot x=\frac{1}{r}x^TA^Tx=0
\end{equation*}
and $\diver Z_A=\operatorname{tr}A=0$,
\begin{equation*}
    \diver(\chi_E Z_A)
    =Z_A\cdot D\chi_E+\chi_E\diver Z_A=0
\end{equation*}
in the sense of distributions. If $R_t=e^{tA}$, then for every $\varphi\in C_c^1(\R^n)$,
\begin{align*}
\frac{d}{dt}\int_{\mathbb R^n}
 \chi_E(x)\varphi(R_{-t}x)\,dx
&=-\int_{\mathbb R^n}
 \chi_E(x)Z_A(x)\cdot
 \nabla(\varphi\circ R_{-t})(x)\,dx\\
&=\left\langle
 \diver(\chi_EZ_A),\varphi\circ R_{-t}
 \right\rangle=0.
\end{align*}
Thus the integral is independent of $t$. By a change of variables $x=R_ty$ and noting that $\det R_t=1$, 
\begin{equation*}
    \int_{\R^n}\chi_E(R_ty)\varphi(y)dy=\int_{\R^n}\chi_E(y)\varphi(y)dy
\end{equation*}
for every $\varphi\in C_c^1(\R^n)$. This yields $\chi_E\circ R_t=\chi_E$ almost everywhere. Since $A$ was arbitrary and the rotations $e^{tA}$ generate $\mathrm{SO}(n)$, the function $\chi_E$ is invariant almost everywhere under every rotation about the origin. A standard averaging argument
over $\mathrm{SO}(n)$, together with Fubini's theorem, then shows that $\chi_E$ agrees almost everywhere with a radial function. Since $\chi_E$ takes only the values zero and one, there exists a measurable function $\gamma:(0,\infty)\to\{0,1\}$ such that $\chi_E(x)=\gamma(|x|)$ almost everywhere.

To determine the radial profile, let $0\leq\psi\in C_c^1((0,\infty))$ and define
\begin{equation*}
    Y(x)=\frac{\psi(r)}{r^{n-1}}\partial_r.
\end{equation*}
Then $Y$ is smooth and compactly supported away from the origin, and $\diver Y=r^{1-n}\psi'(r)$.  The Gauss--Green formula and $\nu_E=\partial_r$ give
\begin{align*}
    |\Sn^{n-1}|\int_0^\infty \gamma(r)\psi'(r)\,dr
    &=\int_E\diver Y\,dx\\
    &=\int_{\partial^*E}\frac{\psi(r)}{r^{n-1}}\,d\calH^{n-1}\geq0.
\end{align*}
Thus $D_r\gamma\leq0$ in distributions, so $\gamma$ has a non-increasing representative.  Since $\gamma$ takes only the values zero and one, compactness and positive measure imply
\begin{equation*}
    \chi_E=\mathbf 1_{B_R}
\end{equation*}
almost everywhere for some $R>0$.  The converse follows by direct computation.  Applying \eqref{eq:divergence-estimate} to $E=\Om_t$ gives \eqref{eq:B-vs-A} for almost every $t>0$.
\end{proof}

\begin{proposition}[Monotonicity]\label{prop:monotonicity}
Let $\Om\subset\R^n$ be a smooth outward-minimizing domain, and let $\{\Om_t\}$ be the weak inverse mean curvature flow starting from $\Om$. For every $k>0$, define, for almost every $t>0$ and at the prescribed value $t=0$,
\begin{equation*}
    Q_k(t):=|\Sig_t|^{-\frac{n-1+k}{n-1}}F_k(t).
\end{equation*}
Then $Q_k$ agrees almost everywhere on $(0,\infty)$ with a non-increasing function, whose value at $t=0$ is the prescribed value computed on the initial boundary.
\end{proposition}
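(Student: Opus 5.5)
The plan is to use the exponential area law \eqref{eq:area-growth}, $|\Sig_t|=e^tP(\Om)$ for all $t\geq0$ including the prescribed endpoint, so that
\begin{equation*}
    Q_k(t)=P(\Om)^{-\frac{n-1+k}{n-1}}\,e^{-\alpha t}F_k(t),\qquad \alpha:=\frac{n-1+k}{n-1}.
\end{equation*}
It therefore suffices to show that the distribution $D_t(e^{-\alpha t}F_k)$, taken on $\R$ under the endpoint convention \eqref{eq:endpoint-derivative-convention}, is a nonpositive measure. A function in $L^1_{\mathrm{loc}}$ whose distributional derivative is a nonpositive Radon measure agrees a.e.\ with a non-increasing function. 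Because the extension to $t<0$ is constant and equal to the prescribed value $Q_k(0)$, that representative takes the value $Q_k(0)$ at $t=0$; more precisely, its right limit at $0$ is at most $Q_k(0)$, with the jump measured by $D_tQ_k(\{0\})\leq0$.

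The first step is the product rule. Let $w(t)=e^{-\alpha t}$ for $t\geq0$, and replace it by a smooth function on $\R$ that agrees with it on $[0,\infty)$. For nonnegative $\eta\in C_c^1([0,\infty))$, put $\tilde\eta=w\eta$ into the identity of Lemma~\ref{lem:transport}. Since $\tilde\eta'=w\eta'-\alpha w\eta$ and $\tilde\eta(0)=\eta(0)$, the measure identity \eqref{eq:F-defect-measure} gives
\begin{align*}
    -\int_0^\infty\eta'(t)w(t)F_k(t)\,dt-\eta(0)F_k(0)
    &=\int_0^\infty\eta w\,(A_k-\alpha F_k)\,dt-\int_{[0,\infty)}\eta w\,d\mathfrak e_k.
\end{align*}
Here the endpoint term is handled consistently: $F_k(0)w(0)=F_k(0)$, and the $\eta(0)$ terms match because $\tilde\eta(0)=\eta(0)$. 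Since \eqref{eq:F-defect-measure} is stated for $C^1_c([0,\infty))$ test functions, $\tilde\eta$ is admissible.

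The second step is the sign. Lemma~\ref{lem:divergence} gives \eqref{eq:B-vs-A}, $F_k\geq\frac{n-1}{n-1+k}A_k=\alpha^{-1}A_k$, for a.e.\ $t>0$. Hence $A_k-\alpha F_k\leq0$ a.e. on $(0,\infty)$, and $\mathfrak e_k\geq0$. So the right-hand side is $\leq0$ for every nonnegative $\eta$. This shows $D_t(wF_k)=w(A_k-\alpha F_k)\,dt-w\,\mathfrak e_k\leq0$ as a measure on $\R$, including the atom at $0$, which equals $-\int_\Sig r^k(1-\tau)\,d\mu$ plus any pushforward mass of $\{u=0\}\setminus\overline\Om$.

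The main subtlety, I expect, is the endpoint bookkeeping. First, the area law must hold at $t=0$ with the prescribed $\Sig$; this is where outward minimizing is used, so that $|\Sig_t|=e^t|\Sig|$ rather than $e^tP(\Om_0^+)$. Second, the passage from ``nonpositive derivative measure'' to ``a.e.\ equal to a non-increasing function with value $Q_k(0)$ at $0$'' must be justified. For the latter I would define $\bar Q(t):=Q_k(0)+c\,D_t(wF_k)([0,t])$ for $t\geq0$, with $c=P(\Om)^{-\alpha}$. This is non-increasing and, by the standard one-dimensional characterization of $BV_{\mathrm{loc}}$ functions through their derivative measure, agrees with $Q_k$ a.e.\ on $(0,\infty)$. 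Taking $\eta$ approximating $\mathbf 1_{[0,t]}$ then recovers the value at $0$ as $Q_k(0)\geq\bar Q(0+)$.
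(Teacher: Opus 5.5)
Your proof is correct and is essentially the paper's argument: the paper sets $H_k=wF_k$ (with $w=e^{-\alpha t}$ on $[0,\infty)$ and $H_k$ extended constantly to $t<0$), combines the BV product rule with \eqref{eq:F-defect-measure} and \eqref{eq:B-vs-A} to get $D_tH_k=-e^{-\alpha t}\bigl(\mathfrak e_k+(\alpha F_k-A_k)\,dt\bigr)\leq0$, and then passes to $Q_k$ using the area law $|\Sigma_t|=e^t|\Sigma|$. Your substitution $\tilde\eta=w\eta$ into the transport identity is that same product rule carried out by hand, and your endpoint bookkeeping (the atom at $0$ is $-\mathfrak e_k(\{0\})$, so the right limit at $0$ is at most $Q_k(0)$) agrees with Remark~\ref{rem:Euclidean-endpoint}.
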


\begin{proof}
First define $H_k(t):=e^{-\frac{n-1+k}{n-1} t}F_k(t)$ for $t\geq0$, with $H_k(0)=F_k(0)$, and then extend $H_k$ itself constantly to $t<0$. Equivalently, $H_k=wF_k$ on $\mathbb R$, where $w(t)=1$ for $t<0$ and $w(t)=e^{-\frac{n-1+k}{n-1} t}$ for $t\geq0$. The product rule for BV functions, \eqref{eq:F-defect-measure}, and \eqref{eq:B-vs-A} give
\begin{equation}\label{eq:Q-defect}
    D_tH_k
    =-e^{-\frac{n-1+k}{n-1} t}\left(
      \mathfrak e_k+\left(\frac{n-1+k}{n-1} F_k-A_k\right)\,dt\right).
\end{equation}
The right-hand side is a nonpositive Radon measure. Hence $H_k$ agrees almost everywhere with a non-increasing function. Since $|\Sig_t|=e^t|\Sig|$ by \eqref{eq:area-growth},
\begin{equation*}
    Q_k(t)=|\Sig|^{-\frac{n-1+k}{n-1}}H_k(t)
\end{equation*}
for almost every $t>0$ and at $t=0$.  This gives the asserted representative of $Q_k$.
\end{proof}

\begin{proof}[Proof of Proposition \ref{prop:weighted-three-term}]

We first compute the limit of $Q_k(t)$ as $t\to\infty$. By the eventual smoothness theorem of Huisken--Ilmanen \cite[Theorem~2.7]{HI2008}, after increasing $T>0$ if necessary, the weak flow agrees for $t\geq T$ with a smooth star-shaped mean-convex inverse mean curvature flow. Set
\begin{equation*}
    \widetilde\Omega_t:=e^{-\frac{t}{n-1}}\Omega_t,
    \qquad
    \widetilde\Sigma_t:=e^{-\frac{t}{n-1}}\Sigma_t .
\end{equation*}
The asymptotic convergence of the rescaled inverse mean curvature flow \cite{Gerhardt1990,Urbas1990} implies that, for all sufficiently large $t$, one may write
\begin{equation*}
    \widetilde\Sigma_t
    =\{\rho_t(\theta)\theta:\theta\in\mathbb S^{n-1}\},
    \qquad
    \rho_t\longrightarrow R_\infty
    \quad\text{in }C^1(\mathbb S^{n-1}),
\end{equation*}
where, by the area law \eqref{eq:area-growth},
\begin{equation*}
    R_\infty
    =\left(\frac{|\Sigma|}{|\mathbb S^{n-1}|}\right)^{\frac1{n-1}}.
\end{equation*}
For every $k>0$, polar coordinates give
\begin{equation*}
    k\int_{\widetilde\Omega_t}r^{k-1}\,dx
    =\frac{k}{n-1+k}
      \int_{\mathbb S^{n-1}}\rho_t^{\,n-1+k}\,d\theta,
\end{equation*}
while the radial graph representation gives
\begin{equation*}
    \int_{\widetilde\Sigma_t}r^k\,d\widetilde\mu_t
    =
    \int_{\mathbb S^{n-1}}
       \rho_t^{\,n-1+k}
       \sqrt{1+\frac{|\overline{\nabla}\rho_t|_{g_{\mathbb{S}^{n-1}}}^2}{\rho_t^2}}\,d\theta,
\end{equation*}
where $\overline{\nabla}$ denotes the Levi-Civita connection on $\mathbb{S}^{n-1}$. Consequently,
\begin{equation*}
    \lim_{t\to\infty}
    \left(
      \int_{\widetilde\Sigma_t}r^k\,d\widetilde\mu_t
      -k\int_{\widetilde\Omega_t}r^{k-1}\,dx
    \right)
    =
    \frac{n-1}{n-1+k}
    |\mathbb S^{n-1}|R_\infty^{\,n-1+k}.
\end{equation*}
Since $Q_k$ is invariant under Euclidean dilations and
\begin{equation*}
    |\widetilde\Sigma_t|
    =|\Sigma|
    =|\mathbb S^{n-1}|R_\infty^{\,n-1},
\end{equation*}
we obtain
\begin{equation}\label{eq:limit-Q}
    \lim_{t\to\infty}Q_k(t)
    =
    \frac{n-1}{n-1+k}
    |\mathbb S^{n-1}|^{-\frac{k}{n-1}}.
\end{equation}
This argument applies to every $k>0$. In particular, the possible singularity of $r^{k-1}$ at the origin when $0<k<1$ is harmless in the polar-coordinate formula above.

By Proposition \ref{prop:monotonicity} and \eqref{eq:limit-Q},
\begin{equation*}
    |\Sig|^{-\frac{n-1+k}{n-1}}
    \left(\int_\Sig r^k\,d\mu-k\int_\Om r^{k-1}\,dx\right)
    =Q_k(0)
    \geq
    \frac{n-1}{n-1+k}|\Sn^{n-1}|^{-\frac{k}{n-1}}.
\end{equation*}
Rearranging gives \eqref{eq:weighted-main}.

Suppose that equality holds, and let $Q_k$ be the non-increasing
representative provided by Proposition \ref{prop:monotonicity}. The
equality assumption and \eqref{eq:limit-Q} give $Q_k(0)=\lim_{s\to\infty}Q_k(s)$.
Therefore, for every $t\geq0$,
\begin{equation*}
    Q_k(0)\geq Q_k(t)\geq
    \lim_{s\to\infty}Q_k(s)=Q_k(0).
\end{equation*}
Hence $Q_k$ is constant and $D_tQ_k=0$ in distributions. Equation \eqref{eq:Q-defect} then implies
\begin{equation}\label{eq:equality-defects-zero}
    \mathfrak e_k=0,
    \qquad
    A_k=\frac{n-1+k}{n-1} F_k
    \quad\text{for almost every }t>0.
\end{equation}

We first show that there is no initial jump.  Assume that $  |\Om_0^+\setminus\Om|>0$.  Since $\partial\Om$ has zero $n$-dimensional measure, the open set $ \mathcal P:=\Om_0^+\setminus\overline\Om$ is nonempty.  The identity $\mathfrak e_k(\{0\})=0$ and the nonnegativity of both terms in \eqref{eq:Euclidean-defect} give
\begin{equation*}
    \partial_r\cdot\nu=1
    \quad\text{almost everywhere in }\mathcal P\setminus\{0\}.
\end{equation*}
Hence $\nu=\partial_r$ there.  Choose a ball $U$ compactly contained in $\mathcal P$ whose closure does not contain the origin.  Since $u$ is constant on $U$, equations \eqref{eq:calibration-div} and \eqref{eq:calibration} give
\begin{equation*}
    \diver\nu=0
    \quad\text{in }U.
\end{equation*}
On the other hand,
\begin{equation*}
    \diver \partial_r=\frac{n-1}{r}>0
    \quad\text{in }U,
\end{equation*}
which is a contradiction.  Thus $\Om_0^+=\Om$ up to a null set.

The second identity in \eqref{eq:equality-defects-zero} is equivalent to equality in \eqref{eq:divergence-estimate} for almost every $t$.  Lemma \ref{lem:divergence} shows that $\Om_t$ is a ball centered at the chosen origin for almost every $t$.  Choose such times $t_j\downarrow0$.  The absence of an initial jump and the right continuity of the weak flow give
\begin{equation*}
    \chi_{\Om_{t_j}}\longrightarrow\chi_\Om
    \quad\text{in }L^1_{\mathrm{loc}}.
\end{equation*}
If $R_j$ is the radius of $\Om_{t_j}$, then the area law \eqref{eq:area-growth} gives
\begin{equation*}
    R_j
    =\left(\frac{e^{t_j}|\Sig|}{|\Sn^{n-1}|}\right)^{\frac1{n-1}}
    \longrightarrow
    \left(\frac{|\Sig|}{|\Sn^{n-1}|}\right)^{\frac1{n-1}}.
\end{equation*}
Together with the local $L^1$ convergence, this shows that $\Om$ agrees almost everywhere with a ball centered at the chosen origin.  Since $\Om$ is a smooth domain, it is that centered ball.

Conversely, equality for a centered ball follows by direct computation.
\end{proof}

\subsection{Proof of the Euclidean Weinstock inequality}

We first record the consequence of the case $k=1$.

\begin{lemma}\label{lem:moment-lower}
Under the assumptions of Proposition \ref{prop:weighted-three-term},
\begin{equation*}
    \int_\Sig r^2\,d\mu
    \geq
    |B^n|^{-\frac2n}|\Sig|\,|\Om|^{\frac2n}.
\end{equation*}
Equality holds if and only if $\Om$ is a round ball centered at the origin.
\end{lemma}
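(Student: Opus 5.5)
We will obtain the lemma from the case $k=1$ of Proposition \ref{prop:weighted-three-term}, using the Cauchy--Schwarz inequality on the left side and the isoperimetric inequality on the right. Write $s:=|\Sn^{n-1}|=n|B^n|$. For $k=1$, \eqref{eq:weighted-main} reads
\begin{equation*}
    \int_\Sig r\,d\mu
    \geq
    \frac{n-1}{n}\,s^{-\frac1{n-1}}|\Sig|^{\frac{n}{n-1}}
    +|\Om|.
\end{equation*}
Cauchy--Schwarz gives $\left(\int_\Sig r\,d\mu\right)^2\leq|\Sig|\int_\Sig r^2\,d\mu$. Hence it suffices to prove
\begin{equation*}
    \frac{n-1}{n}s^{-\frac1{n-1}}|\Sig|^{\frac{n}{n-1}}+|\Om|
    \geq
    |B^n|^{-\frac1n}|\Sig|\,|\Om|^{\frac1n}.
\end{equation*}
Squaring this and dividing by $|\Sig|$ then yields the claim.

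To prove the displayed inequality, we use Young's inequality $ab\leq \frac{a^p}{p}+\frac{b^q}{q}$ with $p=\frac{n}{n-1}$ and $q=n$. We choose the splitting
\begin{equation*}
    a=\lambda|\Sig|,\qquad b=\lambda^{-1}|B^n|^{-\frac1n}|\Om|^{\frac1n},
\end{equation*}
and fix $\lambda$ so that $\frac{b^n}{n}=|\Om|$, that is, $\lambda^{-n}|B^n|^{-1}=n$. Then
\begin{equation*}
    \lambda=(n|B^n|)^{-\frac1n}=s^{-\frac1n}.
\end{equation*}
The first term becomes
\begin{equation*}
    \frac{n-1}{n}\lambda^{\frac{n}{n-1}}|\Sig|^{\frac n{n-1}}
    =\frac{n-1}{n}s^{-\frac1{n-1}}|\Sig|^{\frac{n}{n-1}},
\end{equation*}
which is exactly the required coefficient. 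So the displayed inequality is precisely Young's inequality, and no separate use of the isoperimetric inequality is needed. The main point to check is this exponent bookkeeping, which works out as shown.

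For the equality case, suppose equality holds in the lemma. Then equality holds in each step of the chain, and in particular in \eqref{eq:weighted-main} with $k=1$. By Proposition \ref{prop:weighted-three-term}, $\Sig$ is a round sphere centered at the origin. Conversely, for a centered ball of radius $R$ both sides equal $sR^{n+1}$, as a direct computation shows.
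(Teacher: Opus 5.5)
Your proof is correct and is essentially the paper's argument: the case $k=1$ of \eqref{eq:weighted-main}, Cauchy--Schwarz (H\"older) applied to $\int_\Sig r\,d\mu$, and Young's inequality with exponents $\frac{n}{n-1}$ and $n$, whose exponent bookkeeping you carry out correctly, with the equality case inherited from Proposition \ref{prop:weighted-three-term}. As you note yourself, the isoperimetric inequality mentioned in your opening sentence is not needed for this lemma; the paper uses it only later, in the proof of Theorem \ref{thm:weinstock}.
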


\begin{proof}
The case $k=1$ of \eqref{eq:weighted-main} gives
\begin{equation}\label{eq:k1-three-term}
    \int_\Sig r\,d\mu
    \geq
    \frac{n-1}{n}|\Sn^{n-1}|^{-\frac1{n-1}}|\Sig|^{\frac n{n-1}}
    +|\Om|.
\end{equation}
By H\"older's inequality,
\begin{equation}\label{eq:holder-r}
    \int_\Sig r^2\,d\mu
    \geq
    \frac1{|\Sig|}\left(\int_\Sig r\,d\mu\right)^2.
\end{equation}
Using $|\Sn^{n-1}|=n|B^n|$, Young's inequality gives
\begin{equation*}
    |B^n|^{-1/n}|\Sig|\,|\Om|^{1/n}
    \leq
    |\Om|+\frac{n-1}{n}|\Sn^{n-1}|^{-\frac1{n-1}}|\Sig|^{\frac n{n-1}}.
\end{equation*}
Combining this with \eqref{eq:k1-three-term} gives
\begin{equation*}
    \int_\Sig r\,d\mu
    \geq
    |B^n|^{-1/n}|\Sig|\,|\Om|^{1/n}.
\end{equation*}
Equation \eqref{eq:holder-r} now yields the stated estimate.

Equality forces equality in Proposition \ref{prop:weighted-three-term}, H\"older's inequality, and Young's inequality.  Proposition \ref{prop:weighted-three-term} already implies that $\Om$ is a ball centered at the origin.  The converse is immediate.
\end{proof}

\begin{proof}[Proof of Theorem \ref{thm:weinstock}]
Choose the origin to be the boundary barycenter, so that
\begin{equation*}
    \int_\Sig x_i\,d\mu=0,
    \qquad i=1,\ldots,n.
\end{equation*}
Using $x_i$ as test functions in \eqref{eq:steklov-variational} gives
\begin{equation*}
    \sigma_1(\Om)\int_\Sig x_i^2\,d\mu
    \leq
    \int_\Om|\nabla x_i|^2\,dx
    =|\Om|.
\end{equation*}
Summing over $i$ gives
\begin{equation*}
    \sigma_1(\Om)\int_\Sig r^2\,d\mu
    \leq n|\Om|.
\end{equation*}
By Lemma \ref{lem:moment-lower},
\begin{equation*}
    \sigma_1(\Om)
    \leq
    n|B^n|^{\frac2n}|\Om|^{\frac{n-2}{n}}|\Sig|^{-1}.
\end{equation*}
Multiplying by $|\Sig|^{1/(n-1)}$ and using the isoperimetric inequality
\begin{equation*}
    |\Sig|\geq n|B^n|^{\frac1n}|\Om|^{\frac{n-1}{n}},
\end{equation*}
we obtain
\begin{equation*}
    \sigma_1(\Om)|\Sig|^{\frac1{n-1}}
    \leq
    (n|B^n|)^{\frac1{n-1}}
    =|\Sn^{n-1}|^{\frac1{n-1}}.
\end{equation*}
This is \eqref{eq:weinstock-main}, since $\sigma_1(B^n)=1$ for the unit ball.

If equality holds, then equality holds in Lemma \ref{lem:moment-lower}.  Therefore $\Om$ is a round ball.  Conversely, round balls attain equality.
\end{proof}

\begin{remark}\label{rem:Euclidean-endpoint}
Strict outward minimality is not required. Indeed, the minimizing-hull property and outward minimality give
\[
    P(\Omega_0^+)=P(\Omega)=|\Sigma|,
\]
so the area law remains $|\Sigma_t|=e^t|\Sigma|$. Moreover, \eqref{eq:Q-defect} gives
\begin{align*}
     (D_t Q_k)(\{0\})=&Q_k(0+)-Q_k(0)\\
     =&-|\Sigma|^{-\frac{n-1+k}{n-1}}\mathfrak e_k(\{0\}) \\
    =&-|\Sigma|^{-\frac{n-1+k}{n-1}} \left(
      \int_{\{u=0\}\setminus\overline\Omega}
        kr^{k-1}(1-\partial_r\cdot\nu)\,dx
      +\int_\Sigma r^k(1-\tau)\,d\mu
      \right) \leq 0.
\end{align*}
Here both integrals are nonnegative because $\partial_r\cdot\nu\leq1$ and $\tau\leq1$. Thus a possible enlargement at $t=0$ can only produce a downward jump of $Q_k$ and does not affect its monotonicity.
\end{remark}

\section{The hyperbolic case}\label{sec.4}

We now consider the hyperbolic case.  We first recall the radial functions $g$ and $h$ from Gu--Li--Wan \cite{GuLiWan2025}.  We then prove an endpoint weak-transport inequality for $\int_{\Sigma_t}g\dd\mu_t$.  To handle the denominator of the Gu--Li--Wan functional \eqref{s1.Mt}, we mollify the relevant quantities in time.  The weighted volume integral of a geodesic ball is convex as a function of the ball volume, so Jensen's inequality preserves the mass-transplantation estimate after mollification, including a possible contribution at $t=0$.  Finally, we insert the radial comparison estimates into the Steklov Rayleigh quotient.

\subsection{Notation from Gu--Li--Wan \cite{GuLiWan2025}}

We view the hyperbolic space as a warped product
\begin{equation*}
    (\HH^n,g_{\HH})=([0,\infty)\times\Sn^{n-1},dr^2+\lambda(r)^2g_{\Sn^{n-1}}),
    \qquad
    \lambda(r)=\sinh r.
\end{equation*}
After choosing an origin $O$, let $B_r$ and $S_r$ denote the geodesic ball and sphere of radius $r$ centered at the origin. We consider the volume-area ratio function 
\begin{align*}
    g(r)&:=\frac{|B_r|}{|S_r|}=\frac{1}{\lambda(r)^{n-1}}\int_0^r\lambda(s)^{n-1}\,ds,
\end{align*}
Then the first positive Steklov eigenvalue of the geodesic ball $B_r\subset \mathbb{H}^n$ satisfies 
\begin{equation*}
    \sigma_1(B_r)=\frac{g'(r)}{g(r)}. 
\end{equation*}
Denote 
\begin{equation}\label{s4.q}
    q(r):=(g'(r))^2+(n-1)\frac{g(r)^2}{\lambda(r)^2}.
\end{equation}
The identities used below are
\begin{equation}\label{s4.q.div}
    g'=1-(n-1)\frac{\lambda'g}{\lambda},
    \qquad
    \diver(g\partial_r)=1,
    \qquad
    \diver(gg'\partial_r)=q.
\end{equation}
The analytic properties needed below are collected in
\cite[Propositions~3.4 and~3.5]{GuLiWan2025}: $g$ is nonnegative,
strictly increasing and concave, with
\begin{equation*}
    g(0)=0,
    \qquad g'(0)=\frac1n,
    \qquad \lim_{r\to\infty}g(r)=\frac1{n-1},
    \qquad \lim_{r\to\infty}g'(r)=0.
\end{equation*}
In particular, concavity makes $g'$ non-increasing and
$0<g'(r)\leq1/n$ for finite $r$.  The first identity in
\eqref{s4.q.div} also gives
\begin{equation*}
    \frac{\lambda'g}{\lambda}=\frac{1-g'}{n-1},
\end{equation*}
so this ratio is non-decreasing and
\begin{equation}\label{eq:hyp-a-range}
    \frac1n\leq \frac{\lambda'(r)g(r)}{\lambda(r)} \leq\frac1{n-1}.
\end{equation}
The radial function $x\mapsto g(r(x))$ is locally Lipschitz near the origin and $ \nabla g=g'(r)\partial_r$ 
almost everywhere.

For a finite-perimeter set $E\subset\HH^n$ define
\begin{align*}
    J(E):=\int_E\frac{\lambda'g}{\lambda}\,dv.
\end{align*}
 Set
\begin{equation*}
    J_B(r):=J(B_r),
    \qquad
    V_B(r):=|B_r|.
\end{equation*}
Following Gu--Li--Wan \cite[\S 4.1]{GuLiWan2025}, define $h:(0,\infty)\to(0,\infty)$ by
\begin{equation*}
    h(J_B(r))=\frac1{|S_r|}.
\end{equation*}
Indeed,
\begin{equation*}
    J_B'(r)
    =\frac{\lambda'(r)g(r)}{\lambda(r)}|S_r|>0,
\end{equation*}
and \eqref{eq:hyp-a-range} gives
\begin{equation*}
    J_B(r)\geq\frac1nV_B(r)\longrightarrow\infty
    \qquad\text{as }r\to\infty.
\end{equation*}
Hence $J_B$ is a smooth increasing bijection from $(0,\infty)$ onto
$(0,\infty)$. Consequently,
\begin{equation*}
    h(s)=\frac{1}{\left|S_{J_B^{-1}(s)}\right|}
\end{equation*}
is smooth on $(0,\infty)$. Lemma~4.1 of \cite{GuLiWan2025} states that $h$ is positive, decreasing and log-convex, and gives
\begin{equation}\label{eq:hyp-h-log-derivative}
    \frac{h'(J_B(r))}{h(J_B(r))}
    =-\frac{n-1}{V_B(r)}.
\end{equation}
For later use, log-convexity implies
\begin{equation}\label{eq:hyp-hhprime-monotone}
    \bigl(hh'\bigr)'=(h')^2+hh''\geq0,
\end{equation}
so $s\mapsto h(s)h'(s)$ is non-decreasing on $(0,\infty)$. Since $\lambda'g/\lambda$ is increasing, mass transplantation (see \cite[\S 4]{FL21} and \cite{We56}) gives
\begin{equation}\label{eq:hyp-J-mass-transplant}
    J(E)\geq J(B_r),
    \qquad \text{when}~~ |B_r|=|E|.
\end{equation}
The function $J_B$, viewed as a function of ball volume, is convex because
\begin{equation*}
    \frac{dJ_B}{dV_B}=\frac{\lambda'(r)g(r)}{\lambda(r)}
\end{equation*}
and $\lambda'g/\lambda$ is increasing.

\subsection{A weak transport inequality for \texorpdfstring{$\int_{\Sigma_t}g\dd\mu_t$}{int Sigma g}}

Let $\Omega\subset\HH^n$ be a bounded outward-minimizing domain with smooth boundary $\Sigma$.  Let $\{\Omega_t\}_{t\geq0}$ be the proper weak inverse mean curvature flow starting from $\Omega$.  We set $\Omega_0=\Omega$ and $\Sigma_0=\Sigma$.  The area growth law is
\begin{equation*}
    |\Sigma_t|=e^t|\Sigma|,
    \qquad t\geq0.
\end{equation*}

With respect to the chosen origin $O$, define, for $t>0$,
\begin{equation*}
    V(t):=|\Omega_t|,\qquad
    J(t):=\int_{\Omega_t}\frac{\lambda'g}{\lambda}\,dv,\qquad
    C(t):=\int_{\Omega_t}g'\,dv,
\end{equation*}
and, for almost every $t>0$, define
\begin{equation*}
    G(t):=\int_{\Sigma_t}g\,d\mu_t.
\end{equation*}
At $t=0$, prescribe
\begin{equation*}
    G(0):=\int_\Sigma g\,d\mu,\qquad
    V(0):=|\Omega|,\qquad
    J(0):=\int_\Omega\frac{\lambda'g}{\lambda}\,dv,\qquad
    C(0):=\int_\Omega g'\,dv.
\end{equation*}
The identity \eqref{s4.q.div} gives
\begin{equation*}
    C(t)=V(t)-(n-1)J(t),
    \qquad t\geq0.
\end{equation*}
Since the weak-flow sets $\Omega_t$ are nested and both
$\lambda'g/\lambda$ and $g'$ are nonnegative, the functions
$V$, $J$ and $C$ are non-decreasing on $[0,\infty)$. Whenever
distributional derivatives or time convolutions are used below, all four functions are extended to $t<0$ by their prescribed values at $t=0$.

\begin{lemma}\label{lem:hyp-weak-transport-G}
The function $G$ belongs to $L^1_{\mathrm{loc}}([0,\infty))$.  After the constant extensions described above, the following inequality holds in distributions on $\mathbb R$:
\begin{equation}\label{eq:hyp-measure-transport}
    D_tG\leq G\,dt+D_tC.
\end{equation}
Equivalently, for every nonnegative $\eta\in C_c^1([0,\infty))$,
\begin{equation}\label{eq:hyp-transport-test}
    -\int_0^\infty\eta'(t)G(t)\,dt-\eta(0)G(0)
    \leq
    \int_0^\infty\eta(t)G(t)\,dt
    -\int_0^\infty\eta'(t)C(t)\,dt-\eta(0)C(0).
\end{equation}
\end{lemma}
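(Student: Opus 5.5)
We apply Lemma~\ref{lem:endpoint-transport} with $f=g\circ r$. This function is nonnegative and locally Lipschitz on $\HH^n$, with $\nabla f=g'(r)\partial_r$ almost everywhere, and $A_f$ coincides with $G$. In particular Lemma~\ref{lem:endpoint-transport} gives $G\in L^1_{\mathrm{loc}}([0,\infty))$. For nonnegative $\eta\in C_c^1([0,\infty))$, inequality \eqref{eq:endpoint-transport-ineq} yields
\begin{equation*}
    -\int_0^\infty\eta'(t)G(t)\,dt-\eta(0)G(0)
    \leq\int_0^\infty\eta(t)G(t)\,dt
    +\int_{\HH^n\setminus\overline\Omega}\eta(u)\,g'(r)\,\partial_r\cdot\nu\,dv .
\end{equation*}
Since $g'>0$, $\eta\geq0$ and $|\nu|\leq1$, the last integral is at most $\int_{\HH^n\setminus\overline\Omega}\eta(u)g'\,dv$. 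Near the origin $g'$ is bounded, since $0<g'\leq 1/n$, so no regularization analogous to $f_\delta$ is needed. The only point to check is that Lemma~\ref{lem:endpoint-transport} only requires $f\in W^{1,\infty}_{\mathrm{loc}}$, which holds because $g$ is Lipschitz near $O$ with $g(0)=0$.

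It remains to identify $\int_{\HH^n\setminus\overline\Omega}\eta(u)g'\,dv$ with the distributional expression $-\int_0^\infty\eta'(t)C(t)\,dt-\eta(0)C(0)$. This is the same Fubini computation as \eqref{eq:Bk-distribution}, with $kr^{k-1}$ replaced by the bounded nonnegative weight $g'$. We write
\begin{equation*}
    C(t)=C(0)+\int_{\HH^n\setminus\overline\Omega}\mathbf 1_{\{u<t\}}g'\,dv
\end{equation*}
for $t>0$. Here $\partial\Omega$ is a null set and $u\geq0$, so $\Omega_t\setminus\Omega=\{u<t\}\setminus\overline\Omega$ up to measure zero. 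Integrating against $-\eta'$, the constant part cancels with $-\eta(0)C(0)$ because $\int_0^\infty\eta'\,dt=-\eta(0)$. Fubini is justified by properness of $u$ and the boundedness of $g'$, and the inner integral $-\int_{u(x)}^\infty\eta'(t)\,dt=\eta(u(x))$ gives exactly $\int\eta(u)g'\,dv$. Combining the two steps gives \eqref{eq:hyp-transport-test}. Since $C$ is non-decreasing and extended constantly to $t<0$, the right-hand side of the distributional form is $G\,dt+D_tC$, which is \eqref{eq:hyp-measure-transport}.

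The main obstacle is bookkeeping rather than analysis. We must ensure that the set $\{u=0\}\setminus\overline\Omega$, where a jump at $t=0$ may occur, is correctly attributed. In the Fubini identity this set contributes $\eta(0)\int_{\{u=0\}\setminus\overline\Omega}g'\,dv$, which is precisely the atom of $D_tC$ at $0$, namely $C(0+)-C(0)$. The endpoint defect $\eta(0)\int_\Sigma g(\tau-1)\,d\mu\leq0$ has already been discarded in \eqref{eq:endpoint-transport-ineq}. Hence the inequality holds on all of $\mathbb R$, including the endpoint, without any strict outward-minimizing assumption.
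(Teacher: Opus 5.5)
Your proposal is correct and follows the paper's proof essentially step for step. You apply Lemma~\ref{lem:endpoint-transport} with $f=g$, bound $g'\langle\partial_r,\nu\rangle\le g'$ and discard the nonpositive endpoint defect (the paper does this starting from \eqref{eq:endpoint-transport-exact}, you start from \eqref{eq:endpoint-transport-ineq}), and then identify $\int\eta(u)g'\,dv$ with the distributional derivative of $C$ by the same Fubini computation as in \eqref{eq:Bk-distribution}, with your treatment of the atom at $t=0$ matching the paper's.
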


\begin{proof}
Apply Lemma \ref{lem:endpoint-transport} with $f=g$.  The function $G$ is locally integrable, and for every nonnegative $\eta\in C_c^1([0,\infty))$,
\begin{align}\label{s4-1.pf1}
    -\int_0^\infty\eta'G\,dt-\eta(0)G(0)
    &=\int_0^\infty\eta G\,dt
      +\int_{\HH^n\setminus\overline\Omega}
       \eta(u)g'(r)\langle\partial_r,\nu\rangle\,dv\nonumber\\
    &\quad+\eta(0)\int_\Sigma g(\tau-1)\,d\mu\nonumber\\
    &\leq\int_0^\infty\eta G\,dt
      +\int_{\HH^n\setminus\overline\Omega}\eta(u)g'(r)\,dv.
\end{align}
Here we used $\langle\partial_r,\nu\rangle\leq1$ and $\tau\leq1$.  On the other hand,
\begin{equation*}
    C(t)=C(0)+\int_{\HH^n\setminus\overline\Omega}
      \mathbf 1_{\{u<t\}}g'(r)\,dv.
\end{equation*}
Here $u\geq0$ on $\HH^n\setminus\Omega$, and \eqref{s4.q.div} and \eqref{eq:hyp-a-range} give $0\leq g'(r)\leq1/n$.  Therefore Fubini's theorem, applied as in \eqref{eq:Bk-distribution}, gives
\begin{align}\label{s4-1.pf2}
    -\int_0^\infty\eta'C\,dt-\eta(0)C(0)
    =&\int_{\HH^n\setminus\overline\Omega}g'(r)\left(-\int_0^\infty \eta'(t)\mathbf 1_{\{u<t\}}dt\right)dv\nonumber\\
   =& \int_{\HH^n\setminus\overline\Omega}\eta(u)g'(r)\,dv.
\end{align}
Equivalently,
\begin{equation*}
    D_tC
    =
    u_\#\left(
      g'(r)\,dv\mathbin{\llcorner}
      (\mathbb H^n\setminus\overline\Omega)
    \right).
\end{equation*}
Combining \eqref{s4-1.pf1} and \eqref{s4-1.pf2} proves
\eqref{eq:hyp-transport-test}, and hence
\eqref{eq:hyp-measure-transport}.
\end{proof}

\subsection{The weak Gu--Li--Wan monotonicity}
For almost every $t>0$, and at the separately prescribed value $t=0$, set
\begin{equation}\label{s4.M}
    \mathcal M(t):=\frac{1}{|\Sigma_t|}
    \frac{G(t)}{V(t)h(J(t))}.
\end{equation}
\begin{lemma}
\label{lem:hyp-D-estimate}
Extend $G$, $V$, $J$ and $C$ to $(-\infty,0)$ by their prescribed values at $t=0$. Then $G,V,J,C\in BV_{\mathrm{loc}}(\mathbb R)$.

Let $\rho_\varepsilon\in C_c^\infty(\mathbb R)$ be nonnegative and
satisfy
\begin{equation*}
    \int_{\mathbb R}\rho_\varepsilon(s)\,ds=1,
    \qquad
    \operatorname{supp}\rho_\varepsilon
    \subset(-\varepsilon,\varepsilon).
\end{equation*}
For $\Theta\in\{G,V,J,C\}$, define
\begin{equation*}
    \Theta_\varepsilon(t)
    :=(\rho_\varepsilon*\Theta)(t)
    =\int_{\mathbb R}\rho_\varepsilon(t-s)\Theta(s)\,ds.
\end{equation*}
Then $V_\varepsilon>0$, $J_\varepsilon>0$ and
$V_\varepsilon h(J_\varepsilon)>0$ on $\mathbb R$. Moreover, \begin{equation}\label{eq:hyp-mollified-measures}
\begin{aligned}
    0
    &\leq\frac1nV_\varepsilon'
    \leq J_\varepsilon'
    \leq\frac1{n-1}V_\varepsilon',\\
    C_\varepsilon'
    &=V_\varepsilon'-(n-1)J_\varepsilon'
    \geq0.
\end{aligned}
\end{equation}
Furthermore,
\begin{equation}\label{eq:hyp-mollified-D}
    \bigl(V_\varepsilon h(J_\varepsilon)\bigr)'
    \geq h(J_\varepsilon)C_\varepsilon',
\end{equation}
and
\begin{equation}\label{eq:hyp-mollified-G}
    G_\varepsilon'
    \leq G_\varepsilon+C_\varepsilon',
    \qquad
    G_\varepsilon\geq V_\varepsilon.
\end{equation}
\end{lemma}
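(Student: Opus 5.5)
The plan is to derive every assertion from three facts already available: monotonicity of $V,J,C$, the measure inequality \eqref{eq:hyp-measure-transport}, and mass transplantation \eqref{eq:hyp-J-mass-transplant} combined with convexity of $J_B$ as a function of volume. Mollification with a nonnegative kernel preserves all the inequalities between measures, and positivity of $\rho_\varepsilon$ is used throughout.

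\emph{Regularity and positivity.} After constant extension, $V$, $J$ and $C$ are non-decreasing on $\mathbb R$, hence in $BV_{\mathrm{loc}}(\mathbb R)$. For $G$, first note $0\le g\le \frac1{n-1}$, so $0\le G(t)\le \frac{1}{n-1}e^t|\Sigma|$ is locally bounded. By \eqref{eq:hyp-measure-transport}, the distribution $D_tG-G\mathbf 1_{(0,\infty)}\,dt-D_tC$ is nonpositive, hence a Radon measure. Therefore $D_tG$ is a locally finite measure and $G\in BV_{\mathrm{loc}}$. Positivity follows from $V(s)\ge|\Omega|>0$ for all $s$ and $J\ge\frac1nV$ by \eqref{eq:hyp-a-range}, which give $V_\varepsilon,J_\varepsilon>0$. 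Then $h>0$ gives $V_\varepsilon h(J_\varepsilon)>0$.

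\emph{The derivative bounds \eqref{eq:hyp-mollified-measures}.} As in \eqref{s4-1.pf2}, Fubini gives
\[
D_tV=u_\#\bigl(dv\mathbin{\llcorner}(\HH^n\setminus\overline\Omega)\bigr),
\qquad
D_tJ=u_\#\Bigl(\tfrac{\lambda'g}{\lambda}\,dv\mathbin{\llcorner}(\HH^n\setminus\overline\Omega)\Bigr).
\]
The pointwise bounds \eqref{eq:hyp-a-range} therefore give $\frac1nD_tV\le D_tJ\le\frac1{n-1}D_tV$ as measures. Since $\Theta_\varepsilon'=\rho_\varepsilon*D_t\Theta$ and $\rho_\varepsilon\ge0$, the first line of \eqref{eq:hyp-mollified-measures} follows. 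The identity $C=V-(n-1)J$ holds on all of $\mathbb R$, so by linearity $C_\varepsilon'=V_\varepsilon'-(n-1)J_\varepsilon'\ge0$.

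\emph{The main step \eqref{eq:hyp-mollified-D}.} Expanding,
\[
\bigl(V_\varepsilon h(J_\varepsilon)\bigr)'-h(J_\varepsilon)C_\varepsilon'
= J_\varepsilon'\bigl(V_\varepsilon h'(J_\varepsilon)+(n-1)h(J_\varepsilon)\bigr).
\]
Since $J_\varepsilon'\ge0$, it suffices to show $h'(J_\varepsilon)/h(J_\varepsilon)\ge-(n-1)/V_\varepsilon$. Let $\Phi$ denote $J_B$ as a function of ball volume; $\Phi$ is increasing and convex. Let $r_\varepsilon$ be the radius with $J_B(r_\varepsilon)=J_\varepsilon$. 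By \eqref{eq:hyp-h-log-derivative}, the required bound is equivalent to $V_B(r_\varepsilon)\le V_\varepsilon$, which in turn is equivalent to $\Phi(V_\varepsilon)\le J_\varepsilon$. This is the only genuinely nonlinear point, and it is handled by Jensen's inequality. Mass transplantation \eqref{eq:hyp-J-mass-transplant} gives $J(s)\ge\Phi(V(s))$ for every $s$: for $s>0$ apply it to $\Omega_s$, and for $s\le0$ apply it to the prescribed $\Omega$. Integrating against the probability density $\rho_\varepsilon(t-\cdot)$ and using convexity of $\Phi$,
\[
J_\varepsilon(t)\ \ge\ \int_{\mathbb R}\rho_\varepsilon(t-s)\,\Phi(V(s))\,ds\ \ge\ \Phi(V_\varepsilon(t)).
\]
This gives \eqref{eq:hyp-mollified-D}.

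\emph{The estimate \eqref{eq:hyp-mollified-G}.} Convolve \eqref{eq:hyp-measure-transport} with $\rho_\varepsilon\ge0$. This gives $G_\varepsilon'\le\rho_\varepsilon*(G\mathbf 1_{(0,\infty)})+C_\varepsilon'$, and since $G\ge0$ everywhere, $\rho_\varepsilon*(G\mathbf 1_{(0,\infty)})\le G_\varepsilon$. For $G_\varepsilon\ge V_\varepsilon$ it suffices to prove $G(s)\ge V(s)$ pointwise and then mollify. Apply the Gauss--Green theorem for finite-perimeter sets to the locally Lipschitz field $g\partial_r$, with a cutoff as in Lemma~\ref{lem:divergence}. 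Since $\diver(g\partial_r)=1$ by \eqref{s4.q.div}, this gives
\[
V(s)=\int_{\partial^*\Omega_s}g\langle\partial_r,\nu\rangle\,d\calH^{n-1}\le G(s),
\]
using $g\ge0$. The same computation holds on the smooth set $\Omega$ for $s\le0$. Verifying that the Gauss--Green formula applies to $g\partial_r$, which is only Lipschitz at the origin, is routine: approximate it as in Lemma~\ref{lem:divergence}.
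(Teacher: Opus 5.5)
Your proposal is correct and follows the paper's proof essentially step for step. The shared steps are: pushforward representations of $D_tV$ and $D_tJ$ combined with the pointwise bounds \eqref{eq:hyp-a-range}; the positive-distribution argument for $G\in BV_{\mathrm{loc}}(\mathbb R)$; Gauss--Green with $\diver(g\partial_r)=1$ for $G\geq V$; and Jensen's inequality for the convex volume-parametrized $J_B$, combined with mass transplantation, for \eqref{eq:hyp-mollified-D}. You are even slightly more careful than the paper about the factor $\mathbf 1_{(0,\infty)}$ in $G\,dt$.

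One slip to fix: by \eqref{eq:hyp-h-log-derivative}, the bound you need is $V_B(r_\varepsilon)\geq V_\varepsilon$, not $V_B(r_\varepsilon)\leq V_\varepsilon$. Both of your stated equivalences reverse the inequality, so the errors cancel. The condition you actually prove, $\Phi(V_\varepsilon)\leq J_\varepsilon$, is the right one, and the argument stands.
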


\begin{proof}
We first compute the distributional derivatives of the extended
functions. Since $u\geq0$, the constant extensions of $V$ and $J$
satisfy, for every $t\in\mathbb R$,
\begin{align*}
    V(t)
    &=V(0)+\int_{\HH^n\setminus\overline\Omega}
      \mathbf 1_{\{u<t\}}\,dv,\\
    J(t)
    &=J(0)+\int_{\HH^n\setminus\overline\Omega}
      \mathbf 1_{\{u<t\}}
      \frac{\lambda'g}{\lambda}\,dv.
\end{align*}
For $t<0$, the two integrals vanish, so these formulas agree with the
constant extensions. On every bounded time interval, properness of $u$
restricts the spatial integrals to a compact set, while
\eqref{eq:hyp-a-range} bounds the weight $\lambda'g/\lambda$.
Therefore, Fubini's theorem gives
\begin{equation*}
    D_tV
    =u_\#\left(
      dv\mathbin{\llcorner}
      (\HH^n\setminus\overline\Omega)\right),
    \qquad
    D_tJ
    =u_\#\left(
      \frac{\lambda'g}{\lambda}\,dv
      \mathbin{\llcorner}
      (\HH^n\setminus\overline\Omega)\right).
\end{equation*}
These identities hold as locally finite Radon measures on $\mathbb R$.
In particular, the set
$\{u=0\}\setminus\overline\Omega$ is mapped by $u$ to $t=0$, so its
contribution is automatically included in the two measures.

The pointwise bounds \eqref{eq:hyp-a-range} now imply
\begin{equation}\label{equ-dtV}
    0\leq\frac1nD_tV
    \leq D_tJ
    \leq\frac1{n-1}D_tV,
    \qquad
    D_tC=D_tV-(n-1)D_tJ\geq0
\end{equation}
as inequalities of Radon measures on $\mathbb R$. Consequently, $V,J,C\in BV_{\mathrm{loc}}(\mathbb R)$. 

We next consider $G$. Let $\nu_{\Omega_t}$ denote the
measure-theoretic outward unit normal to $\Omega_t$, with
$\nu_{\Omega_0}:=\nu_\Omega$. Since $\diver(g\partial_r)=1$, the Gauss--Green formula gives
\begin{equation*}
    V(t)
    =\int_{\Sigma_t}
      g\langle\partial_r,\nu_{\Omega_t}\rangle\,d\mu_t
    \leq G(t)
\end{equation*}
for almost every $t>0$ and also at $t=0$. After the constant extension, $G\geq V$ almost everywhere on $\mathbb R$. Lemma \ref{lem:hyp-weak-transport-G} also gives \eqref{eq:hyp-measure-transport} in distributions on $\mathbb R$. Hence 
$$\zeta:=G\,dt+D_tC-D_tG$$
is a positive distribution and therefore a Radon measure. Since
\begin{equation*}
    D_tG=G\,dt+D_tC-\zeta,
\end{equation*}
and all three measures on the right are locally finite, $D_tG$ is
a locally finite signed Radon measure. Thus $G\in BV_{\mathrm{loc}}(\mathbb R)$.

Distributional differentiation commutes with convolution. Consequently,
\begin{equation*}
    \Theta_\varepsilon'=\rho_\varepsilon*D_t\Theta,
    \qquad \Theta\in\{G,V,J,C\}.
\end{equation*}
Since $\rho_\varepsilon$ is nonnegative, convolving
\eqref{equ-dtV} gives \eqref{eq:hyp-mollified-measures}. Convolving the transport inequality \eqref{eq:hyp-measure-transport} for $G$ gives
\begin{equation*}
    G_\varepsilon'\leq G_\varepsilon+C_\varepsilon',
\end{equation*}
while $G\geq V$ gives $G_\varepsilon\geq V_\varepsilon$. This proves
\eqref{eq:hyp-mollified-G}.

The formulas \eqref{equ-dtV} for $V$ and $J$ also show that
\begin{equation*}
    V(t)\geq V(0)>0,
    \qquad
    J(t)\geq J(0)\geq\frac1nV(0)>0
\end{equation*}
for every $t\in\mathbb R$. Therefore
$V_\varepsilon>0$, $J_\varepsilon>0$, and
$V_\varepsilon h(J_\varepsilon)>0$.

It remains to prove \eqref{eq:hyp-mollified-D}. View the ball quantity $J_B$ as a function of ball volume and set
\begin{equation*}
    F(\xi)=J_B\bigl(V_B^{-1}(\xi)\bigr).
\end{equation*}
The derivative of $F$ is $\lambda'g/\lambda$ at the corresponding radius, so $F$ is convex. Mass transplantation gives
\begin{equation*}
    J(t)\geq F(V(t)),\qquad t\in\mathbb R.
\end{equation*}
For $t<0$, this follows from the same inequality for the prescribed
initial domain. Jensen's inequality and the nonnegativity of $\rho_\varepsilon$ now yield
\begin{align*}
    J_\varepsilon(t)=&\int_{\mathbb{R}}\rho_\varepsilon(t-s)J(s)ds\\
    \geq &\int_{\mathbb{R}}\rho_\varepsilon(t-s)F(V(s))ds\\
    \geq &F\left(\int_{\mathbb{R}}\rho_\varepsilon(t-s)V(s)ds\right)\\
    =&F(V_\varepsilon(t))\\
    =&J_B\left(V_B^{-1}(V_\varepsilon(t))\right). 
\end{align*}
Since $V_B$ and $J_B$ are strictly increasing, this is equivalent to
\begin{equation*}
    V_B\left(J_B^{-1}(J_\varepsilon)\right)\geq V_\varepsilon.
\end{equation*}
Differentiating $V_\varepsilon h(J_\varepsilon)$ and using \eqref{eq:hyp-h-log-derivative} and \eqref{eq:hyp-mollified-measures}, we obtain
\begin{align*}
    \left( V_\varepsilon h(J_\varepsilon) \right)'=&V_\varepsilon'h(J_\varepsilon)+V_\varepsilon h'(J_\varepsilon)J_\varepsilon'\\
    =&h(J_\varepsilon)
      \left(V_\varepsilon'-(n-1)
      \frac{V_\varepsilon}{V_B(J_B^{-1}(J_\varepsilon))}J_\varepsilon'\right)\\
    \geq& h(J_\varepsilon)
      \left(V_\varepsilon'-(n-1)J_\varepsilon'\right)\\
     =&h(J_\varepsilon)C_\varepsilon'.
\end{align*}
This proves \eqref{eq:hyp-mollified-D}.
\end{proof}

\begin{proposition}[Monotonicity]\label{prop:hyp-GLW-monotonicity}
The quantity $\mathcal M(t)$ defined in \eqref{s4.M} agrees almost everywhere on $(0,\infty)$ with a non-increasing function.  At positive-time jump points we choose its right-continuous version, while at $t=0$ we retain the prescribed value in \eqref{s4.M}.
\end{proposition}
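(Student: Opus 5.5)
Since $|\Sigma_t|=e^t|\Sigma|$ by the area law, it suffices to show that $e^{-t}G(t)/(V(t)h(J(t)))$ agrees a.e. with a non-increasing function on $[0,\infty)$, with the prescribed value at $t=0$. The plan is to prove this for the mollified quantity
\begin{equation*}
    \mathcal M_\varepsilon(t):=\frac{e^{-t}G_\varepsilon(t)}{D_\varepsilon(t)},
    \qquad
    D_\varepsilon:=V_\varepsilon h(J_\varepsilon),
\end{equation*}
which is smooth on $\mathbb R$ with $D_\varepsilon>0$ by Lemma \ref{lem:hyp-D-estimate}, and then let $\varepsilon\downarrow0$. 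A small caveat: the factor $e^{-t}$ should be replaced by the weight $w(t)=\min\{1,e^{-t}\}$, as in the Euclidean proof, because the constant extension for $t<0$ does not satisfy $G'\le G$ there. Alternatively, I would multiply by $e^{-t}$ only after restricting to $t>-\varepsilon$ and track the $O(\varepsilon)$ error. The simplest route is to use $e^{-t}$ throughout and note that, for $t<0$, $G_\varepsilon'\le G_\varepsilon+C_\varepsilon'$ still holds by \eqref{eq:hyp-mollified-G}. Since \eqref{eq:hyp-measure-transport} is an inequality on all of $\mathbb R$, no weight issue actually arises.

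Differentiating gives
\begin{equation*}
    \mathcal M_\varepsilon'
    =\frac{e^{-t}}{D_\varepsilon^2}\Bigl((G_\varepsilon'-G_\varepsilon)D_\varepsilon-G_\varepsilon D_\varepsilon'\Bigr).
\end{equation*}
By \eqref{eq:hyp-mollified-G}, $G_\varepsilon'-G_\varepsilon\le C_\varepsilon'$. By \eqref{eq:hyp-mollified-D} together with $G_\varepsilon\ge0$, we have $G_\varepsilon D_\varepsilon'\ge G_\varepsilon h(J_\varepsilon)C_\varepsilon'$. Hence the bracket is at most $h(J_\varepsilon)C_\varepsilon'\,(V_\varepsilon-G_\varepsilon)$. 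This is $\le0$ because $C_\varepsilon'\ge0$ by \eqref{eq:hyp-mollified-measures}, $h>0$, and $G_\varepsilon\ge V_\varepsilon$ by \eqref{eq:hyp-mollified-G}. So $\mathcal M_\varepsilon$ is non-increasing on $\mathbb R$. This is the key algebraic step: the transport term $C'$ in the numerator is exactly absorbed by the $h(J)C'$ lower bound for the denominator, and the leftover factor $V-G$ has the correct sign.

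For the limit, on any compact time set $G_\varepsilon\to G$, $V_\varepsilon\to V$ and $J_\varepsilon\to J$ in $L^1_{\mathrm{loc}}$, and hence a.e. along a subsequence. Since $h$ is continuous, $V,J$ are bounded below by positive constants, and $h(J)$ is bounded below on compact time intervals because $J$ is locally bounded, we get $\mathcal M_\varepsilon\to e^{-t}G/(Vh(J))$ a.e. An a.e. limit of non-increasing functions agrees a.e. with a non-increasing function, and we then take its right-continuous version at positive jump times.

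The main obstacle is the endpoint $t=0$: we must show that the prescribed value $\mathcal M(0)$ dominates this representative for $t>0$. For $t<-\varepsilon$, the mollified functions equal the prescribed constants, so $\mathcal M_\varepsilon(t)=e^{-t}\mathcal M(0)|\Sigma|$ exactly. Monotonicity then gives, for a.e. $t>0$,
\begin{equation*}
    \mathcal M_\varepsilon(t)\le \mathcal M_\varepsilon(-\varepsilon)=e^{\varepsilon}|\Sigma|\mathcal M(0).
\end{equation*}
Letting $\varepsilon\to0$ yields $\mathcal M(t)\le\mathcal M(0)$ for a.e. $t>0$. This shows that the prescribed endpoint value, which includes any initial jump, is compatible with the monotone representative.
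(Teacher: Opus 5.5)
Your proposal is correct and follows the paper's proof essentially step for step. It uses the same mollified quotient with weight $e^{-t}$ on all of $\mathbb R$, and the same absorption of $C_\varepsilon'$ through \eqref{eq:hyp-mollified-D}, which leaves the factor $V_\varepsilon-G_\varepsilon\le0$. It then passes to the limit on a full-measure set and compares with a negative time at which the mollifiers see only the constant extension; taking that time to be $-\varepsilon$ just merges the paper's two successive limits $\varepsilon\downarrow0$ and $a\uparrow0$. Your opening caveat about the weight is unfounded, because for $t<0$ the constant extension has $G'=0\le G(0)$. So, as you conclude yourself, no modified weight is needed.
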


\begin{proof}
Define
\begin{equation*}
    R_\varepsilon
    :=\frac{G_\varepsilon}{V_\varepsilon h(J_\varepsilon)}.
\end{equation*}
The denominator is strictly positive by Lemma
\ref{lem:hyp-D-estimate}. Equations \eqref{eq:hyp-mollified-measures}-\eqref{eq:hyp-mollified-G} give
\begin{align*}
    R_\varepsilon'
    &=\frac{G_\varepsilon'}{V_\varepsilon h(J_\varepsilon)}
      -\frac{G_\varepsilon \left( V_\varepsilon h(J_\varepsilon) \right)'}{\left( V_\varepsilon h(J_\varepsilon) \right)^2}\\
    &\leq R_\varepsilon
      +\frac{C_\varepsilon'}{V_\varepsilon h(J_\varepsilon)}
      -\frac{G_\varepsilon h(J_\varepsilon)C_\varepsilon'}
       {V_\varepsilon^2h(J_\varepsilon)^2}\\
    &=R_\varepsilon
      +\frac{V_\varepsilon-G_\varepsilon}
       {V_\varepsilon^2h(J_\varepsilon)}C_\varepsilon'
    \leq R_\varepsilon.
\end{align*}
Hence $e^{-t}R_\varepsilon(t)$ is non-increasing on $\mathbb R$.

Let $\mathcal T\subset(0,\infty)$ be the common set of Lebesgue points of $G$, $V$, and $J$.  At every point of $\mathcal T$, the approximate-identity theorem gives convergence of $G_\varepsilon$, $V_\varepsilon$, and $J_\varepsilon$ to the corresponding original functions.  Since $V>0$, $J>0$ and $h$ is continuous on $(0,\infty)$, the quotients converge as well. If $s,t\in\mathcal T$ and $s<t$, first apply the monotonicity of $e^{-t}R_\varepsilon$ and then let $\varepsilon\downarrow0$ to obtain
\begin{equation*}
    e^{-s}\frac{G(s)}{V(s)h(J(s))}
    \geq
    e^{-t}\frac{G(t)}{V(t)h(J(t))}.
\end{equation*}
Thus $t\mapsto e^{-t}G(t)/(V(t)h(J(t)))$ agrees almost everywhere with a non-increasing representative on $(0,\infty)$. Here the choice of a representative only fixes the values at jump times and does not change the quantity almost everywhere. We choose the right-continuous version of the monotone function, while keeping the prescribed value at $t=0$. 

It remains to compare this prescribed endpoint value with the
positive-time values. Fix $a<0$ and $t\in\mathcal T$. For $0<\varepsilon<-a$, the convolutions at $a$ involve only the constant extensions, so
\begin{equation*}
G_\varepsilon(a)=G(0),\qquad
V_\varepsilon(a)=V(0),\qquad
J_\varepsilon(a)=J(0).
\end{equation*}
Monotonicity therefore implies, for $t\in\mathcal T$,
\begin{equation*}
e^{-a}\frac{G(0)}{V(0)h(J(0))}
\geq
e^{-t}
\frac{G_\varepsilon(t)}
{V_\varepsilon(t)h(J_\varepsilon(t))}.
\end{equation*}
With $a<0$ fixed, let $\varepsilon\downarrow0$ through values smaller
than $-a$. Then let $a\uparrow0$. In this order we obtain
\begin{equation*}
    \frac{G(0)}{V(0)h(J(0))}
    \geq e^{-t}\frac{G(t)}{V(t)h(J(t))},
    \qquad t\in\mathcal T.
\end{equation*}
Thus the non-increasing representative has the prescribed value at
$t=0$ and no separate right-limit value is inserted.

Finally, since $|\Sigma_t|=e^t|\Sigma|$, this proves that $\mathcal M(t)$ has a non-increasing representative on $[0,\infty)$.
\end{proof}

\begin{corollary}\label{cor:hyp-g2-lower}
For the initial domain $\Omega$,
\begin{equation}\label{eq:hyp-g2-lower}
    \int_\Sigma g^2\,d\mu
    \geq
    |\Sigma|\,|\Omega|^2h\left(\int_\Omega \frac{\lambda'g}{\lambda}\,dv\right)^2.
\end{equation}
\end{corollary}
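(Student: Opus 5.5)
The plan is to combine Cauchy--Schwarz on $\Sigma$ with the inequality $\mathcal M(0)\geq\lim_{t\to\infty}\mathcal M(t)$ obtained from Proposition~\ref{prop:hyp-GLW-monotonicity}. Cauchy--Schwarz gives
\begin{equation*}
    \int_\Sigma g^2\,d\mu\geq\frac{1}{|\Sigma|}\Bigl(\int_\Sigma g\,d\mu\Bigr)^2=\frac{G(0)^2}{|\Sigma|}.
\end{equation*}
Therefore \eqref{eq:hyp-g2-lower} follows once we show
\begin{equation*}
    G(0)\geq|\Sigma|\,|\Omega|\,h(J(0)),
\end{equation*}
that is, $\mathcal M(0)\geq1$.

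By Proposition~\ref{prop:hyp-GLW-monotonicity}, $\mathcal M(0)\geq\mathcal M(t)$ for almost every $t>0$. It therefore suffices to show that $\liminf_{t\to\infty}\mathcal M(t)\geq1$ along a full-measure set of times. For geodesic balls centered at $O$ we have $G=g(r)|S_r|=|B_r|$, $|\Sigma_t|=|S_r|$ and $h(J_B(r))=1/|S_r|$, so $\mathcal M\equiv1$ exactly.

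To handle general large times, I will use the Huisken--Ilmanen eventual smoothness theorem \cite[Theorem~2.7]{HI2008} in its hyperbolic form. It says that for $t\geq T$ the weak flow is a smooth star-shaped IMCF. I will then invoke the known asymptotics of hyperbolic IMCF (Gerhardt \cite{Gerhardt2011}): $\Sigma_t$ is a radial graph $r=\rho_t(\theta)$ with $\rho_t-\bar\rho_t$ bounded and $|\overline\nabla\rho_t|\to0$. Since $g$ is bounded and tends to $\frac1{n-1}$, we get $G(t)=\int_{\Sigma_t}g\,d\mu_t=\bigl(\frac1{n-1}+o(1)\bigr)|\Sigma_t|$.

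The expected main obstacle is the comparison of the denominator $V(t)h(J(t))$ with $1$ as $t\to\infty$. We need
\begin{equation*}
    \frac{|\Sigma_t|}{V(t)}\cdot\frac{1}{(n-1)\,h(J(t))^{-1}}\to1 .
\end{equation*}
For the first factor, $V(t)/|\Sigma_t|\to\frac1{n-1}$, since in the graph representation both $|\Omega_t|$ and $|\Sigma_t|$ are asymptotic to $\frac{1}{2^{n-1}}\int e^{(n-1)\rho_t}$ up to the factors $\frac{1}{n-1}$ and $1$, with the gradient corrections vanishing.

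For the second factor, $h(J)^{-1}=|S_{r_J}|$, where $J_B(r_J)=J(t)$. Since $\lambda'g/\lambda\to\frac1{n-1}$ uniformly at large radius, $J(t)=\frac{V(t)}{n-1}+O(1)$, and also $J_B(r)=\frac{V_B(r)}{n-1}+O(1)$. Hence $V_B(r_J)=V(t)+O(1)$, and so $|S_{r_J}|\sim(n-1)V(t)$. This gives $V(t)h(J(t))\to\frac1{n-1}$. Combined with $G/|\Sigma_t|\to\frac1{n-1}$, it yields $\mathcal M(t)\to1$.

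If the $O(1)$ bookkeeping above proves delicate, I will fall back on the mass-transplantation inequality \eqref{eq:hyp-J-mass-transplant}. It gives $J(t)\geq J_B(r_V)$ with $|B_{r_V}|=V(t)$, hence $h(J(t))\leq 1/|S_{r_V}|$ because $h$ is decreasing. The remaining step is then the asymptotic $|S_{r_V}|/V(t)\to n-1$, which holds for hyperbolic balls. Either route gives $\liminf\mathcal M(t)\geq1$, and combining this with Cauchy--Schwarz finishes the proof.
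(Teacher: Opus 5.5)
Your architecture matches the paper's. Cauchy--Schwarz reduces \eqref{eq:hyp-g2-lower} to $\mathcal M(0)\geq1$. Monotonicity reduces this to $\liminf_{t\to\infty}\mathcal M(t)\geq1$ along a full-measure set of times. Your fallback bound $h(J(t))\leq 1/|S_{r_V}|$ from \eqref{eq:hyp-J-mass-transplant} is exactly how the paper handles the denominator.

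The gap is in the asymptotic input. You get $G(t)/|\Sigma_t|\to\frac1{n-1}$, and implicitly $V(t)\to\infty$, from an eventual smoothness theorem ``in its hyperbolic form'' together with Gerhardt's asymptotics. But \cite[Theorem~2.7]{HI2008} is a Euclidean result, and the paper uses it only in the Euclidean section. You would need a hyperbolic analogue valid for every $n\geq3$, which you neither prove nor can cite here. As written, every large-time statement in your proof rests on this unsupported step.

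The facts you actually need are much weaker and follow without any regularity. Choose a geodesic ball $B_{\rho_0}(p)$ compactly contained in $\Omega$. Its weak flow is $B_{\rho(t)}(p)$ with $|S_{\rho(t)}|=e^t|S_{\rho_0}|$, so $\rho(t)\to\infty$. The weak comparison principle \cite[Theorem~2.2]{HI2001} gives $B_{\rho(t)}(p)\subset\Omega_t$. Since $\Omega_t$ is open, this yields $\operatorname*{ess\,inf}_{\partial^*\Omega_t}r\geq\rho(t)-d(O,p)\to\infty$ and $V(t)\geq|B_{\rho(t)}|\to\infty$. Because $g(s)\to\frac1{n-1}$, the first bound gives $G(t)/|\Sigma_t|\to\frac1{n-1}$. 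The second gives $r_V\to\infty$, so $\mathcal M(t)\geq g(r_V)^{-1}\,G(t)/|\Sigma_t|\to1$. This is precisely the paper's argument.

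Separately, your primary route for the denominator uses a false estimate. One has $J-\frac{V}{n-1}=-\frac{C}{n-1}$ with $C=\int g'\,dv$. Since $g'(r)\sim\frac{4}{n-3}e^{-2r}$ for $n\geq4$ and $g'(r)\sim4re^{-2r}$ for $n=3$, the integral $\int_{B_R}g'\,dv$ grows like $e^{(n-3)R}$ for $n\geq4$ and like $R^2$ for $n=3$. So neither $J(t)=\frac{V(t)}{n-1}+O(1)$ nor $J_B(r)=\frac{V_B(r)}{n-1}+O(1)$ holds. The errors are only $o(V)$, because $g'$ is bounded and tends to $0$. That still gives $V_B(r_J)\sim V(t)$ and the same limit, but the mass-transplantation fallback is cleaner and makes this route unnecessary.
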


\begin{proof}
We shall use the comparison principle \cite[Theorem~2.2]{HI2001} in the following form: if two weak flows start from bounded initial domains $E$ and $\Omega$ with $E\subset \Omega$, then the corresponding sublevel sets satisfy $E_t\subset \Omega_t$ for all $t\geq0$. 

Choose $p\in\Omega$ and a geodesic ball $B_{\rho_0}(p)$ compactly
contained in $\Omega$.  Its explicit spherical weak flow is
$B_{\rho(t)}(p)$, where $|S_{\rho(t)}|=e^t|S_{\rho_0}|$ and hence
$\rho(t)\to\infty$. By the comparison principle, we have $B_{\rho(t)}(p)\subset\Omega_t$ for all $t\geq 0$. Since $\Omega_t$ is open, every $x\in\partial^*\Omega_t$ satisfies
$d(p,x)\geq\rho(t)$. The triangle inequality then gives
\begin{equation*}
r(x)=d(O,x)\geq d(p,x)-d(O,p)\geq\rho(t)-d(O,p).
\end{equation*}
Therefore, taking the essential infimum over $\partial^*\Omega_t$ yields
\begin{equation}
    \operatorname*{ess\,inf}_{\partial^*\Omega_t}r
    \geq\rho(t)-d(O,p)
    \longrightarrow\infty.
    \label{equ-rinfty}
\end{equation}

Let $\mathcal T\subset(0,\infty)$ be a full-measure set consisting
of common Lebesgue points of $G$, $V$ and $J$, on which $\mathcal M(t)$ agrees with the non-increasing representative
provided by Proposition \ref{prop:hyp-GLW-monotonicity}. For such $t$, the preceding lower bound \eqref{equ-rinfty} for $r$ on $\partial^*\Omega_t$ yields
\begin{align}
\left|\frac{G(t)}{|\Sigma_t|}-\frac1{n-1}
\right|&\leq\operatorname*{ess\,sup}_{x\in\partial^*\Omega_t}
\left|g(r(x))-\frac{1}{n-1}\right|\nonumber\\
&\leq\sup_{s\geq\rho(t)-d(O,p)}
\left|g(s)-\frac{1}{n-1}\right|.  \label{equ-GoverSigma}
\end{align}
Since $\rho(t)-d(O,p)\to\infty$ and $g(s)\to1/(n-1)$ as $s\to\infty$, the right-hand side of \eqref{equ-GoverSigma} tends to zero. Therefore,
\begin{equation}
    \lim_{\substack{t\to\infty\\ t\in\mathcal T}}
    \frac{G(t)}{|\Sigma_t|}=\frac{1}{n-1}.
    \label{s4.lim}
\end{equation}

For $t\in\mathcal T$, let $R_t$ be the volume radius defined by $|B_{R_t}|=V(t)$. Since $B_{\rho(t)}(p)\subset\Omega_t$, we have
$V(t)\to\infty$, and therefore $R_t\to\infty$. From \eqref{eq:hyp-J-mass-transplant} and the monotonicity of $h$,
\begin{equation*}
    h(J(t))\leq h(J_B(R_t))=\frac1{|S_{R_t}|}.
\end{equation*}
Consequently,
\begin{equation*}
    \mathcal M(t)
    \geq
    \frac{|S_{R_t}|}{|B_{R_t}|}\frac{G(t)}{|\Sigma_t|}
    =\frac1{g(R_t)}\frac{G(t)}{|\Sigma_t|}.
\end{equation*}
Together with $g(R_t)\to1/(n-1)$ and \eqref{s4.lim}, this gives
\begin{equation*}
    \liminf_{\substack{t\to\infty\\t\in\mathcal T}}\mathcal M(t)\geq1.
\end{equation*}
Since $\mathcal M$ agrees on $\mathcal T$ with its non-increasing representative,
\begin{equation*}
\mathcal M(0)\geq\liminf_{\substack{t\to\infty\\ t\in\mathcal T}}\mathcal M(t)\geq 1.
\end{equation*}
Consequently,
\begin{equation*}
    \int_\Sigma g\,d\mu
    \geq
    |\Sigma|\,|\Omega|h\left(\int_\Omega \frac{\lambda'g}{\lambda}\,dv\right).
\end{equation*}
H\"older's inequality now gives
\begin{equation*}
    \int_\Sigma g^2\,d\mu
    \geq\frac1{|\Sigma|}\left(\int_\Sigma g\,d\mu\right)^2,
\end{equation*}
which proves \eqref{eq:hyp-g2-lower}.
\end{proof}

\subsection{Radial comparison estimate for $h(J)^2$}
\label{subsec:radial}

The weak-flow part above gives the lower bound \eqref{eq:hyp-g2-lower} for every $n\geq3$.  What remains is a purely radial comparison estimate for $\int_\Omega q\,dv$, where $q$ is defined in \eqref{s4.q}.  

For $n\geq5$, the following is the radial estimate underlying \cite[Lemmas~4.4 and~4.5]{GuLiWan2025}.

\begin{lemma}[The high-dimensional radial estimate]\label{lem:hyp-h2-high}
Assume $n\geq5$, and let $\Omega\subset\HH^n$ be bounded and measurable with $|\Omega|>0$.  Let $R$ be defined by $|B_R|=|\Omega|$, and set
\begin{equation*}
    J(\Omega):=\int_\Omega\frac{\lambda'g}{\lambda}\,dv.
\end{equation*}
Then
\begin{equation}\label{eq:hyp-h2-bound-nge5}
    h(J(\Omega))^2
    \geq
    \frac{1}{g'(R)|B_R|\,|S_R|^2}
    \int_\Omega q\,dv.
\end{equation}
\end{lemma}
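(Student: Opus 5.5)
The plan is to linearize $h^2$ at the ball value $J_B(R)$, which turns \eqref{eq:hyp-h2-bound-nge5} into a comparison between $\Omega$ and $B_R$ for one radial weight. That comparison then follows from mass transplantation.

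\emph{Step 1 (tangent line for $h^2$).} By Lemma~4.1 of \cite{GuLiWan2025}, $h$ is log-convex, so $h^2=e^{2\log h}$ is convex on $(0,\infty)$. The tangent-line inequality at $s_0=J_B(R)$, combined with \eqref{eq:hyp-h-log-derivative} and $h(J_B(R))=1/|S_R|$, gives
\begin{equation*}
    h(J(\Omega))^2\geq \frac{1}{|S_R|^2}\left(1-\frac{2(n-1)}{|B_R|}\bigl(J(\Omega)-J_B(R)\bigr)\right).
\end{equation*}
The linearization has the right sign here. Mass transplantation \eqref{eq:hyp-J-mass-transplant} gives $J(\Omega)\geq J_B(R)$, so $h(J(\Omega))\le 1/|S_R|$. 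A lower bound on $h(J(\Omega))^2$ must therefore absorb the excess $J(\Omega)-J_B(R)$, and that is exactly what the ball-side term $\int_\Omega q$ will have to pay for.

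\emph{Step 2 (reduction to one radial weight).} By \eqref{s4.q.div} and the divergence theorem on $B_R$,
\begin{equation*}
    \int_{B_R}q\,dv=g(R)g'(R)|S_R|=g'(R)|B_R|.
\end{equation*}
Multiply the Step~1 bound by $g'(R)|B_R|\,|S_R|^2$. It then suffices to show
\begin{equation*}
    \int_\Omega q\,dv+2(n-1)g'(R)\,J(\Omega)\;\leq\;\int_{B_R}q\,dv+2(n-1)g'(R)\,J(B_R).
\end{equation*}
Write $\lambda'g/\lambda=(1-g')/(n-1)$ and introduce the radial weight
\begin{equation*}
    \varphi_R(r):=q(r)+2g'(R)\bigl(1-g'(r)\bigr).
\end{equation*}
The inequality to prove becomes $\int_\Omega\varphi_R\,dv\leq\int_{B_R}\varphi_R\,dv$ whenever $|\Omega|=|B_R|$.

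\emph{Step 3 (mass transplantation, the main obstacle).} The equal-volume comparison in Step~2 follows from the usual mass transplantation argument \cite{FL21,We56} if
\begin{equation*}
    \varphi_R(r)\geq\varphi_R(R)\ \text{ for } r<R,
    \qquad
    \varphi_R(r)\leq\varphi_R(R)\ \text{ for } r>R.
\end{equation*}
Monotonicity of $\varphi_R$ is not needed; only this sign change at $r=R$ matters. Verifying it is the real work and the only place where $n\geq5$ should enter.

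The competition is the following. We have $\varphi_R'=q'-2g'(R)g''$, and $g''\leq0$ makes the second term nonnegative. So the first thing to do is use the explicit formula \eqref{s4.q} together with the ODE $g'=1-(n-1)\lambda'g/\lambda$ to express $q'$ through $g$, $g'$, $\lambda$ and $\lambda'$. Then I would show that $q$ decreases fast enough to dominate $2g'(R)|g''|$.

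If global monotonicity fails, I would fall back to comparing directly with the value at $R$. For $r<R$ one has $g'(r)\ge g'(R)$ and $1-g'(r)\le 1-g'(R)$, and the extra term is controlled by $2g'(R)(g'(R)-g'(r))$. Symmetrically for $r>R$. In each case this reduces to an inequality of the form
\begin{equation*}
    q(r)-q(R)\ \gtrless\ 2g'(R)\bigl(g'(r)-g'(R)\bigr).
\end{equation*}
I expect this to be the content of the radial computations behind \cite[Lemmas~4.4 and~4.5]{GuLiWan2025}, and the step most likely to need dimension-dependent estimates of $g$ via \eqref{eq:hyp-a-range}.
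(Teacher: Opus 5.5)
Your Steps 1 and 2 are correct and match the paper. The tangent-line bound for the convex function $h^2$ is the paper's mean-value argument with $hh'$ non-decreasing. Your target $\int_\Omega\varphi_R\,dv\le\int_{B_R}\varphi_R\,dv$ is exactly the inequality the paper ends up proving.

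\textbf{The gap is Step 3.} The sign-change property of $\varphi_R$ is the entire analytic content of the lemma. You leave it unproved and only hope it is contained in \cite{GuLiWan2025}. What that reference (Lemma~4.4) actually provides is an $R$-independent fact: for $n\ge5$ the function
\[
\Phi(r):=q(r)+\frac{2(n-1)}{n}\frac{\lambda'(r)g(r)}{\lambda(r)}=q(r)+\frac2n\bigl(1-g'(r)\bigr)
\]
is decreasing. You are missing the observation that links your $R$-dependent weight to it. Since $g$ is concave with $g'(0)=1/n$, we have $0<g'(R)\le 1/n$ and $g''\le0$. Hence
\[
\varphi_R'=q'-2g'(R)g''\le q'-\frac2n\,g''=\Phi'\le0 .
\]
So $\varphi_R$ is globally decreasing, the fallback you considered is unnecessary, and ordinary mass transplantation finishes the proof. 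The paper phrases the same step differently. It applies mass transplantation to $\Phi$ to get $\frac{2(n-1)}{n}(J(\Omega)-J_R)\le\int_{B_R}q\,dv-\int_\Omega q\,dv$. It then uses $J(\Omega)\ge J_R$ and $g'(R)\le1/n$ to lower the coefficient to $2(n-1)g'(R)$.

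This also corrects your guess about where $n\ge5$ enters. It is not through \eqref{eq:hyp-a-range} but through the sign of $\Phi'$, which genuinely fails in low dimensions. For $n=4$, writing $x=1/(\cosh r+1)$, one computes
\[
\Phi=\frac12-\frac{x^2}{6}+\frac{2x^3}{3}+\frac{4x^4}{3},
\]
which increases in $r$ for large $r$. That is why, for $n=3,4$, the paper proves your same Step~2 target by a different route: the decreasing function $\Psi$ combined with the weighted mass transplantation with $f=g'$. Rearranged, \eqref{eq:hyp-low-q-J} is exactly your inequality.
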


\begin{proof}
Set $J_R=J(B_R)$. Since $\Omega$ and $B_R$ have the same volume and $\lambda'g/\lambda$ is increasing, mass transplantation gives $J(\Omega)\geq J_R$.  If $J(\Omega)=J_R$, set $\theta=J_R$. Otherwise, the mean value theorem gives $\theta\in(J_R,J(\Omega))$. In either case, there exists $\theta\in[J_R,J(\Omega)]$ such that
\begin{equation*}
    h(J(\Omega))^2 = h(J_R)^2 + 2h(\theta)h'(\theta)(J(\Omega)-J_R).
\end{equation*}
By \eqref{eq:hyp-hhprime-monotone}, the function $s\mapsto h(s)h'(s)$ is non-decreasing. Hence,
\begin{equation*}
    2h(\theta)h'(\theta) \geq 2h(J_R)h'(J_R)
    =-\frac{2(n-1)}{|B_R|\,|S_R|^2}.
\end{equation*}
It follows that
\begin{equation}\label{s4-4.hJ}
    h(J(\Omega))^2
    \geq
    \frac1{|S_R|^2}
    -\frac{2(n-1)}{|B_R|\,|S_R|^2}(J(\Omega)-J_R).
\end{equation}
On the ball, the divergence identity \eqref{s4.q.div} for $gg'\partial_r$ and the relation $g(R)|S_R|=|B_R|$ give
\begin{align}\label{s4-4.q}
    \int_{B_R}q\,dv
    =&\int_{B_R} \diver(gg'\partial_r)\,dv \nonumber\\
    =&\int_{S_R}gg'\,d\mu
    =g'(R)|B_R|.
\end{align}
For dimensions $n\geq 5$, the function
\begin{equation*}
    \Phi(r):=q(r)+\frac{2(n-1)}{n}\frac{\lambda'(r)g(r)}{\lambda(r)}
\end{equation*}
is decreasing by \cite[Lemma~4.4]{GuLiWan2025}.  Mass transplantation then gives $\int_{\Omega}\Phi(r)\mathrm{d}v\leq \int_{B_R}\Phi(r)\mathrm{d}v$. Therefore, 
\begin{align}\label{equ-JJR}
    \frac{2(n-1)}{n}\bigl(J(\Omega)-J_R\bigr)
   =&\int_\Omega \left(\Phi(r)-q(r)\right)\,dv-\int_{B_R} \left(\Phi(r)-q(r)\right)\,dv\nonumber\\
   \leq &  \int_{B_R}q\,dv-\int_\Omega q\,dv\nonumber\\
  = &
    g'(R)|B_R|-\int_\Omega q\,dv.
\end{align}
Finally, $g'(R)\leq1/n$, $J(\Omega)-J_R\geq0$ and \eqref{equ-JJR} imply
\begin{equation*}
    2(n-1)g'(R)(J(\Omega)-J_R)
    \leq g'(R)|B_R|-\int_\Omega q\,dv.
\end{equation*}
Substitution into the estimate \eqref{s4-4.hJ} proves \eqref{eq:hyp-h2-bound-nge5}.
\end{proof}

Gu's thesis \cite[Section~4.3.2]{GuThesis2026} gives a different argument for the low-dimensional cases $n=3,4$.  The idea is to apply a weighted mass-transplantation to a new decreasing function $\Psi$.  We record the details for convenience of readers.

\begin{lemma}[Weighted mass transplantation \cite{GuThesis2026,GLW26}]\label{lem:hyp-weighted-mass-transplant}
Let $B_R\subset \HH^n$ be the geodesic ball of radius $R$ centered at the chosen origin.  Let $E\subset\HH^n$ be bounded and measurable.  Let $f:[0,\infty)\to\mathbb R$ be non-increasing, and let $F\geq0$ be measurable.  Assume that $F$ and $|f|F$ belong to $L^1(E\cup B_R)$.  Then
\begin{equation}\label{eq:hyp-weighted-mt}
    \int_E f(r)F\,dv-\int_{B_R}f(r)F\,dv
    \leq
    f(R)\left(\int_EF\,dv-\int_{B_R}F\,dv\right).
\end{equation}
If $f$ is strictly decreasing and $F>0$ almost everywhere on $E\mathbin{\triangle}B_R$, equality forces $E=B_R$ up to a null set.
\end{lemma}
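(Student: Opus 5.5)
The inequality follows by comparing integrands pointwise on the symmetric difference. We write
\begin{equation*}
    \int_E fF\,dv-\int_{B_R}fF\,dv
    =\int_{E\setminus B_R}fF\,dv-\int_{B_R\setminus E}fF\,dv,
\end{equation*}
and do the same for the right-hand side with $f$ replaced by the constant $f(R)$. These splittings are legitimate because $F$ and $|f|F$ are integrable on $E\cup B_R$. The difference between the left side and the right side of \eqref{eq:hyp-weighted-mt} is then
\begin{equation*}
    \int_{E\setminus B_R}\bigl(f(r)-f(R)\bigr)F\,dv
    +\int_{B_R\setminus E}\bigl(f(R)-f(r)\bigr)F\,dv .
\end{equation*}

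Both terms are nonpositive. On $E\setminus B_R$ we have $r\geq R$, so $f(r)\leq f(R)$ because $f$ is non-increasing; since $F\geq0$, the first integrand is $\leq0$. On $B_R\setminus E$ we have $r<R$, so $f(r)\geq f(R)$ and the second integrand is also $\leq0$. This proves \eqref{eq:hyp-weighted-mt}. No volume constraint is needed, because the constant $f(R)$ absorbs the mismatch in $F$-mass.

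For the equality case, assume $f$ is strictly decreasing. Then equality forces both nonpositive integrals to vanish, so each integrand vanishes almost everywhere.

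On $B_R\setminus E$ we have $r<R$, so strict decrease gives $f(R)-f(r)<0$. Since $F>0$ almost everywhere there, the integrand is strictly negative almost everywhere, and hence $|B_R\setminus E|=0$.

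On $E\setminus B_R$ we have $r\geq R$, and $f(r)<f(R)$ except on the sphere $\{r=R\}$. That sphere has zero volume, so the same argument gives $|E\setminus B_R|=0$. Therefore $E=B_R$ up to a null set.

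The only point requiring care is this measure-zero boundary sphere in the equality case. Everything else is the standard bathtub comparison argument.
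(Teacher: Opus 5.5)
Your proof is correct and follows essentially the same route as the paper: the same splitting over $E\setminus B_R$ and $B_R\setminus E$, with a pointwise sign comparison of $f(r)$ against the constant $f(R)$. Your explicit remark that the sphere $\{r=R\}$ is a null set in the equality case is a detail the paper leaves implicit.
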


\begin{proof}
Equation \eqref{eq:hyp-weighted-mt} is equivalent to
\begin{equation*}
\int_E \bigl(f(r)-f(R)\bigr)F\,dv
\le
\int_{B_R}\bigl(f(r)-f(R)\bigr)F\,dv.
\end{equation*}
The difference between the left-hand side and the right-hand side is
\begin{equation*}
\int_{E\setminus B_R}\bigl(f(r)-f(R)\bigr)F\,dv
-
\int_{B_R\setminus E}\bigl(f(r)-f(R)\bigr)F\,dv.
\end{equation*}
On $E\setminus B_R$ one has $r\geq R$, hence $f(r)-f(R)\leq0$.  Since $F\geq0$, the first integral is non-positive.  On $B_R\setminus E$ one has $r\leq R$, hence $f(r)-f(R)\geq0$.  Thus the second integral is non-negative.  Therefore the displayed difference is non-positive, and \eqref{eq:hyp-weighted-mt} follows.

If $f$ is strictly decreasing and $F>0$ almost everywhere on $E\mathbin{\triangle}B_R$, equality can hold only if $E\setminus B_R$ and $B_R\setminus E$ both have measure zero.  Hence $E=B_R$ up to a null set.
\end{proof}

In particular, when $F\equiv 1$, Lemma \ref{lem:hyp-weighted-mass-transplant} reduces to the classical mass transplantation \cite{We56,FL21}.  Now we state the radial comparison estimate in low dimensions. 

\begin{lemma}[The radial comparison estimate in dimensions $3$ and $4$]\label{prop:hyp-h2-low}
Assume $n=3$ or $n=4$.  Let $\Omega\subset\HH^n$ be bounded and measurable with $|\Omega|>0$.  Let $R$ be defined by $|B_R|=|\Omega|$, and set
\begin{equation*}
    J(\Omega):=\int_\Omega \frac{\lambda'g}{\lambda}\,dv.
\end{equation*}
Then
\begin{equation}\label{eq:hyp-h2-bound-low}
    h(J(\Omega))^2
    \geq
    \frac{1}{g'(R)|B_R|\,|S_R|^2}
    \int_\Omega q\,dv.
\end{equation}
\end{lemma}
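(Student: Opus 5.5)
The plan is to keep the first half of the proof of Lemma \ref{lem:hyp-h2-high} unchanged and to replace only the step that relied on the decreasing function $\Phi$. That step is where $n\geq5$ entered, and it will be handled by the weighted mass transplantation of Lemma \ref{lem:hyp-weighted-mass-transplant}.

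\textbf{Step 1 (dimension-independent reduction).} Set $J_R=J(B_R)$. Mass transplantation \eqref{eq:hyp-J-mass-transplant} gives $J(\Omega)\geq J_R$. Together with the monotonicity of $hh'$ from \eqref{eq:hyp-hhprime-monotone} and the log-derivative \eqref{eq:hyp-h-log-derivative}, this yields \eqref{s4-4.hJ} exactly as before. The divergence identity gives \eqref{s4-4.q}, namely $\int_{B_R}q\,dv=g'(R)|B_R|$. Multiplying \eqref{s4-4.hJ} through by $g'(R)|B_R||S_R|^2$, the claim \eqref{eq:hyp-h2-bound-low} therefore reduces to
\begin{equation*}
    \int_\Omega q\,dv-\int_{B_R}q\,dv
    \leq
    -2(n-1)g'(R)\bigl(J(\Omega)-J_R\bigr).
\end{equation*}

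\textbf{Step 2 (weighted transplantation).} Write $a=\lambda'g/\lambda$. By \eqref{s4.q.div}, $a=(1-g')/(n-1)$. Take the weight $F:=a$, which is positive and bounded by \eqref{eq:hyp-a-range}, and define
\begin{equation*}
    f:=\frac{q+2(n-1)g'(r)\,a}{a}=\frac{q}{a}+2(n-1)g'.
\end{equation*}
Then $fF=q+2(n-1)g'a$. Suppose $f$ is non-increasing. Lemma \ref{lem:hyp-weighted-mass-transplant} then gives
\begin{equation*}
    \int_\Omega q\,dv-\int_{B_R}q\,dv
    +2(n-1)\Bigl(\int_\Omega g'a\,dv-\int_{B_R}g'a\,dv\Bigr)
    \leq f(R)\bigl(J(\Omega)-J_R\bigr).
\end{equation*}
The $g'a$ terms have to be matched to the target. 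Since $g'$ is non-increasing, the weighted lemma applied with $f=g'$ and $F=a$ gives
\begin{equation*}
    \int_\Omega g'a\,dv-\int_{B_R}g'a\,dv\leq g'(R)\bigl(J(\Omega)-J_R\bigr),
\end{equation*}
but this inequality points the wrong way for our purpose.

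I will therefore choose the pair more cleverly. The aim is a decreasing function $\Psi$ with $\Psi\cdot a=q+c\,a$ for a suitable handling of the constant. The cleanest choice seems to be $f=\Psi:=q/a$ with $F=a$. Lemma \ref{lem:hyp-weighted-mass-transplant} then gives
\begin{equation*}
    \int_\Omega q\,dv-\int_{B_R}q\,dv\leq\frac{q(R)}{a(R)}\bigl(J(\Omega)-J_R\bigr),
\end{equation*}
and this is only useful if $q/a$ is non-increasing. Since $J(\Omega)-J_R\geq0$, the Step 1 target would then follow from
\begin{equation*}
    \frac{q(R)}{a(R)}\leq-2(n-1)g'(R),
\end{equation*}
which is false because $q>0$. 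So the right object must be $\Psi=q/a-\kappa$ for a shift constant chosen to exploit $\int_\Omega a=J(\Omega)$ versus $\int_\Omega 1=|B_R|$. Concretely, I will look for $\Psi$ of the form $(q-\alpha)/a$, where $\alpha$ is a constant. Constants integrate equally over $\Omega$ and $B_R$, since the two sets have the same volume, so
\begin{equation*}
    \int_\Omega(q-\alpha)\,dv-\int_{B_R}(q-\alpha)\,dv=\int_\Omega q\,dv-\int_{B_R}q\,dv.
\end{equation*}
Lemma \ref{lem:hyp-weighted-mass-transplant} then bounds this difference by $\Psi(R)\bigl(J(\Omega)-J_R\bigr)$. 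It suffices to arrange
\begin{equation*}
    \Psi(R)=\frac{q(R)-\alpha}{a(R)}\leq-2(n-1)g'(R),
    \qquad\text{that is,}\qquad
    \alpha\geq q(R)+2(n-1)g'(R)a(R).
\end{equation*}
One must also keep $\Psi$ non-increasing on $[0,\infty)$. Note that $\alpha$ may depend on $R$, because $R$ is fixed before the lemma is applied.

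\textbf{Step 3 (the main obstacle).} The crux is an ODE verification. For each fixed $R$, with $\alpha=\alpha(R):=q(R)+2(n-1)g'(R)a(R)$, we need the function $r\mapsto(q(r)-\alpha)/a(r)$ to be non-increasing for $n=3,4$. Its derivative has the sign of $q'a-(q-\alpha)a'$. Because $a'\geq0$, it is enough to have
\begin{equation*}
    q'(r)\,a(r)\leq\bigl(q(r)-\alpha\bigr)a'(r)\quad\text{for all }r.
\end{equation*}
I would expand $q'$, $a'$ using $g'=1-(n-1)\lambda'g/\lambda$ and $g''=-(n-1)(\lambda'g/\lambda)'$, reducing the condition to explicit inequalities in $\sinh$, $\cosh$ and $g$. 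These I would check separately for $r<R$ and $r>R$, using the monotonicity of $g'$ and $a$ and the bounds \eqref{eq:hyp-a-range}. If a single shift fails, the fallback is to split $\Omega$ into its parts inside and outside $B_R$, as in the proof of Lemma \ref{lem:hyp-weighted-mass-transplant}, and use only the one-sided sign conditions. I expect this calculus verification in low dimensions to be the genuinely hard and technical part, where Gu's specific choice of $\Psi$ matters.
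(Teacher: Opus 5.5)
Your Step 1 is correct and matches the paper's reduction: by \eqref{s4-4.hJ} and \eqref{s4-4.q} it suffices to prove
\[
\int_\Omega q\,dv-\int_{B_R}q\,dv\le -2(n-1)g'(R)\bigl(J(\Omega)-J_R\bigr).
\]
This inequality is the whole content of the lemma, and the proposal does not prove it. Worse, the mechanism you commit to in Step 3 is false. You need $r\mapsto (q(r)-\alpha)/a(r)$ to be non-increasing on $[0,\infty)$ with $\alpha\ge\alpha(R):=q(R)+2(n-1)g'(R)a(R)>0$. Enlarging $\alpha$ only makes things worse, since the derivative is $(q/a)'+\alpha a'/a^2$.

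For $n=3$, as $r\to\infty$ one has $q\approx 2e^{-2r}$, $q'\approx-4e^{-2r}$, $a\to\frac12$ and $a'\approx(4r-6)e^{-2r}$. Hence the numerator $q'a-(q-\alpha)a'\approx(4\alpha r-6\alpha-2)e^{-2r}$ is eventually positive, and the required monotonicity fails for every $R$. For $n=4$ the condition reads $\alpha\le q-q'a/a'$ for all $r$. Here $q-q'a/a'\to\frac13$ as $r\to\infty$, while $\alpha(R)\to\frac58$ as $R\to0$, so it fails for small $R$.

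Your fallback, using only the one-sided sign conditions in Lemma \ref{lem:hyp-weighted-mass-transplant}, is the right kind of statement, but it is just the lemma in pointwise form. With $\phi_R(r):=q(r)+2(n-1)g'(R)a(r)$, you need $\phi_R(r)\le\phi_R(R)$ for $r\ge R$ and the reverse for $r\le R$. Using $(n-1)a=1-g'$, one finds
\[
\phi_R(r)-\phi_R(R)=\bigl(g'(r)-g'(R)\bigr)^2-(n-1)\Bigl(\frac{g(R)^2}{\lambda(R)^2}-\frac{g(r)^2}{\lambda(r)^2}\Bigr).
\]
The side $r\le R$ is easy, because $(g/\lambda)'=(1-na)/\lambda\le0$. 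The side $r\ge R$, however, is a quantitative two-variable inequality. The monotonicity of $g'$ and $a$ together with \eqref{eq:hyp-a-range} give only signs, so they cannot supply it, and you offer no other argument.

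The missing idea is the paper's decoupling of the $R$-dependence. Apply Lemma \ref{lem:hyp-weighted-mass-transplant} with $f=g'$ and weight $F=q/g'$ (not $F=a$). Then $fF=q$ and $F=\Psi-2(n-1)a$, with $\Psi$ as in \eqref{eq:hyp-Psi-def}. This yields
\[
\int_\Omega q\,dv-\int_{B_R}q\,dv\le g'(R)\Bigl(\int_\Omega\Psi\,dv-\int_{B_R}\Psi\,dv\Bigr)-2(n-1)g'(R)\bigl(J(\Omega)-J_R\bigr).
\]
The bracket is non-positive by ordinary mass transplantation, once one knows the $R$-independent, one-variable fact that $\Psi$ is decreasing for $n=3,4$. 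That fact is the explicit computation in Appendix \ref{app:low-dimensional-calculation}.

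Pointwise, this is the identity $\phi_R(r)-\phi_R(R)=\bigl(g'(r)-g'(R)\bigr)q(r)/g'(r)+g'(R)\bigl(\Psi(r)-\Psi(R)\bigr)$. Both terms are $\le0$ for $r\ge R$ and $\ge0$ for $r\le R$.
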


\begin{proof}
The proof of \eqref{eq:hyp-h2-bound-low} is similar to that of Lemma \ref{lem:hyp-h2-high}. Set $J_R=J(B_R)$. In dimensions $3$ and $4$, we still have the estimates $J(\Omega)\geq J_R$, \eqref{s4-4.hJ} and \eqref{s4-4.q}. A new point is that when $n=3$ or $n=4$, the function 
\begin{equation}\label{eq:hyp-Psi-def}
    \Psi(r):=g'(r)+(n-1)\frac{g(r)^2}{\lambda(r)^2g'(r)}
       +2(n-1)\frac{\lambda'(r)g(r)}{\lambda(r)}
\end{equation}
has a finite continuous extension to $[0,\infty)$ and is decreasing there (\cite[Section~4.3.2]{GuThesis2026}). We collect the details of the monotonicity in Appendix \ref{app:low-dimensional-calculation}. 

Since $\Psi$ is decreasing and $|\Omega|=|B_R|$, the mass transplantation (i.e., Lemma \ref{lem:hyp-weighted-mass-transplant} with weight $F\equiv1$) implies 
\begin{equation*}
    \int_\Omega \Psi\,dv\leq \int_{B_R}\Psi\,dv.
\end{equation*}
Now apply the weighted mass transplantation in Lemma \ref{lem:hyp-weighted-mass-transplant} to $f(r)=g'(r)$ with weight
\begin{equation*}
    F(r)=g'(r)+(n-1)\frac{g(r)^2}{\lambda(r)^2g'(r)}.
\end{equation*}
Here $f$ is decreasing because $g$ is concave, and $F\geq0$.  Since $g'F=q$ and
\begin{equation*}
    F=\Psi-2(n-1)\frac{\lambda'g}{\lambda},
\end{equation*}
we get
\begin{align}
    \int_\Omega q\,dv-\int_{B_R}q\,dv
    &\leq g'(R)\left(\int_\Omega F\,dv-\int_{B_R}F\,dv\right) \notag\\
    &=g'(R)\left(\int_\Omega \Psi\,dv-\int_{B_R}\Psi\,dv\right)
       -2(n-1)g'(R)(J(\Omega)-J_R) \notag\\
    &\leq -2(n-1)g'(R)(J(\Omega)-J_R).\label{eq:hyp-low-q-J}
\end{align}
Substituting \eqref{eq:hyp-low-q-J} and \eqref{s4-4.q} into \eqref{s4-4.hJ} gives
\begin{align*}
    h(J(\Omega))^2
    \geq &\frac1{|S_R|^2}
       -\frac{1}{g'(R)|B_R|\,|S_R|^2}
        \left(\int_{B_R}q\,dv-\int_\Omega q\,dv\right) \\
    =&\frac1{|S_R|^2}
       -\frac{1}{g'(R)|B_R|\,|S_R|^2}
        \left(g'(R)|B_R|-\int_\Omega q\,dv\right) \\
    =&\frac{1}{g'(R)|B_R|\,|S_R|^2}\int_\Omega q\,dv.
\end{align*}
This is \eqref{eq:hyp-h2-bound-low}.
\end{proof}

\subsection{Proof of the hyperbolic Weinstock inequality}

\begin{lemma}[Hyperbolic center of mass]\label{lem:hyp-center-of-mass}
Let $\Sigma\subset\HH^n$ be a smooth compact hypersurface.  There is a point $O\in\HH^n$ such that, in normal coordinates centered at $O$,
\begin{equation}\label{eq:hyp-center-mass}
    \int_\Sigma g(r)\frac{x_i}{r}\,d\mu=0,
    \qquad i=1,\ldots,n.
\end{equation}
The quotient is understood by its continuous extension at $r=0$.
\end{lemma}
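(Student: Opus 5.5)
The proof will be a topological degree (Brouwer fixed point) argument in the spirit of the classical hyperbolic center-of-mass constructions. We write it in the Poincaré ball model or, more conveniently, via the hyperboloid model, where we can track how the vector in \eqref{eq:hyp-center-mass} depends on the base point. For $p\in\HH^n$ define the vector field
\begin{equation*}
    W(p):=\int_\Sigma g(d(p,x))\,\frac{\exp_p^{-1}(x)}{d(p,x)}\,d\mu(x)\in T_p\HH^n ,
\end{equation*}
where the quotient is extended by zero at $x=p$. In normal coordinates centered at $p$ the components of $W(p)$ are exactly the integrals in \eqref{eq:hyp-center-mass}. It therefore suffices to find a zero of the continuous vector field $W$ on $\HH^n$.

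Continuity of $W$ follows from dominated convergence. The integrand is bounded by $g\le 1/(n-1)$, and the map $(p,x)\mapsto g(d)\exp_p^{-1}(x)/d$ is continuous off the diagonal. It is also bounded near the diagonal, with $g(d)/d\to g'(0)=1/n$, so $g(d)\exp_p^{-1}(x)/d=(g(d)/d)\exp_p^{-1}(x)\to0$ there. The integrand is therefore continuous everywhere.

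The key step is the boundary behavior. Fix a point $o$ and a large radius $\rho$ with $\Sigma\subset B_{\rho/2}(o)$. For $p$ with $d(o,p)=\rho$, every $x\in\Sigma$ lies in a small visual cone seen from $p$ in the direction of $o$. More precisely, the angle at $p$ between $\exp_p^{-1}(x)$ and $\exp_p^{-1}(o)$ is less than $\pi/2$, and in fact tends to zero by hyperbolic trigonometry. Since $g>0$ away from $0$, it follows that
\begin{equation*}
    \langle W(p),\exp_p^{-1}(o)\rangle>0 .
\end{equation*}
So on the sphere $S_\rho(o)$ the field $W$ points strictly inward, and $-W$ points outward.

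To conclude, transport $W$ to $T_o\HH^n\cong\R^n$ via $V(y):=d(\exp_o)_y^{-1}\,W(\exp_o y)$ for $|y|\le\rho$, or via parallel transport along radial geodesics. The radial component of $d\exp_o$ is an isometry along radial directions (Gauss lemma), so the inward condition becomes $\langle V(y),y\rangle<0$ on $|y|=\rho$. The map $y\mapsto y+\epsilon V(y)$, or simply the homotopy $(1-s)(-y)+sV(y)$, has no zeros on $|y|=\rho$, because $\langle\cdot,y\rangle<0$ there. Hence $V$ has degree $(-1)^n\neq0$ on the ball and vanishes somewhere; equivalently, Brouwer applies. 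A zero gives the desired $O$.

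The main obstacle is the inward-pointing estimate. It requires that, for $p$ far away, all of $\Sigma$ lies within an angle $<\pi/2$ of the direction to $o$. I would justify this with the hyperbolic law of cosines, or by the convexity of the ball $B_{\rho/2}(o)$: the angle subtended at $p$ by this ball is $\arcsin\bigl(\sinh(\rho/2)/\sinh\rho\bigr)<\pi/2$, since a tangent cone to a geodesic ball has half-angle $\theta$ with $\sin\theta=\sinh(\text{radius})/\sinh(\text{distance})$. Any $x\in\Sigma\setminus\{p\}$ then contributes a positive term $g(d)\cos(\angle)>0$.
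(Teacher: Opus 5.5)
Your proof is correct, but it takes a different route from the paper's. The paper argues variationally. It sets $\mathcal A(r)=\int_0^r g(s)\,ds$ and $\mathcal F(p)=\int_\Sigma \mathcal A(d(p,x))\,d\mu(x)$. Since $g(r)\to 1/(n-1)$, the function $\mathcal A$ grows linearly, so $\mathcal F$ is proper and attains a minimum at some $O$. The paper then reads off \eqref{eq:hyp-center-mass} from $d\mathcal F_O=0$, using $\mathcal A(r)\sim r^2/(2n)$ to justify differentiability at the diagonal $p=x$.

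Your field is exactly $W=-\nabla\mathcal F$. So you are locating a critical point of the same potential, but by degree theory rather than by minimization. Each step checks out:
\begin{itemize}
\item Continuity of $W$ holds, including at the diagonal, where $g(d)\to0$.
\item The angle at $p$ in the triangle $p,o,x$ is opposite the shorter side $ox$, hence smaller than the angle at $x$, hence acute. Your tangent-cone bound $\arcsin\bigl(\sinh(\rho/2)/\sinh\rho\bigr)<\pi/6$ gives the same conclusion.
\item The Gauss lemma correctly converts $\langle W(p),\exp_p^{-1}(o)\rangle>0$ into $\langle V(y),y\rangle<0$ on $|y|=\rho$, because $\exp_p^{-1}(o)=-(d\exp_o)_y(y)$.
\item The homotopy to $-\mathrm{id}$, which has degree $(-1)^n\neq0$, then forces a zero of $V$.
\end{itemize}

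The paper's route is shorter: it needs only properness and a first-variation computation, with no angle comparison and no degree theory. Yours needs only continuity of $W$, not differentiability of a potential. It would therefore apply equally to weighted averages $\int_\Sigma w(d)\,\exp_p^{-1}(x)/d\,d\mu$ with $w>0$ that are not gradients of anything convenient. As a by-product, your inward-pointing estimate shows that $W$ has no zeros outside a fixed ball around $\Sigma$.
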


\begin{proof}
Set $\mathcal A(r):=\int_0^r g(s)\,ds$ and define
\begin{equation*}
    \mathcal F(p):=\int_\Sigma\mathcal A\bigl(d(p,x)\bigr)\,d\mu(x).
\end{equation*}
Since $g(r)\to1/(n-1)$ as $r\to\infty$, the function $\mathcal F$ is proper and attains a minimum at some point $O$.  The expansion $g(r)=r/n+O(r^3)$ shows that the integrand is differentiable in $p$ even when $p=x$.  For every $v\in T_O\HH^n$, the first variation of distance gives
\begin{equation*}
    0=d\mathcal F_O(v)
    =-\int_\Sigma g(r)
      \left\langle v,\frac{\exp_O^{-1}(x)}{r}\right\rangle\,d\mu(x).
\end{equation*}
Writing $\exp_O^{-1}(x)=\sum_{i=1}^n x_i e_i$ in an orthonormal basis and taking $v=e_i$ proves \eqref{eq:hyp-center-mass}.
\end{proof}

\begin{proof}[Proof of Theorem \ref{thm:hyp-outward-weinstock}]
Choose the origin $O$ from Lemma \ref{lem:hyp-center-of-mass} and set
\begin{equation*}
    \varphi_i:=g(r)\frac{x_i}{r},
    \qquad i=1,\ldots,n.
\end{equation*}
The expansion $g(r)/r=1/n+O(r^2)$ shows that these functions extend smoothly across $O$.  Equation \eqref{eq:hyp-center-mass} makes them admissible in the Steklov Rayleigh quotient.  The polar-coordinate identities are
\begin{equation*}
    \sum_{i=1}^n\varphi_i^2=g^2,
    \qquad
    \sum_{i=1}^n|\nabla\varphi_i|^2
    =(g')^2+(n-1)\frac{g^2}{\lambda^2}=q.
\end{equation*}
Applying \eqref{eq:steklov-variational} to each $\varphi_i$ and summing gives
\begin{equation}\label{eq:hyp-steklov-upper}
    \sigma_1(\Omega)\int_\Sigma g^2\,d\mu
    \leq\int_\Omega q\,dv.
\end{equation}
Combining \eqref{eq:hyp-steklov-upper} with Corollary \ref{cor:hyp-g2-lower} gives
\begin{equation}\label{eq:hyp-sigma-basic}
    \sigma_1(\Omega)
    \leq
    \frac{\int_\Omega q\,dv}{|\Sigma|\,|\Omega|^2h(J(\Omega))^2}.
\end{equation}
Let $R$ be the volume radius defined by $|B_R|=|\Omega|$. We now use the radial comparison estimates from Subsection \ref{subsec:radial}.  These are \eqref{eq:hyp-h2-bound-nge5} for $n\geq5$ and \eqref{eq:hyp-h2-bound-low} for $n=3,4$.  Substituting the appropriate bound into \eqref{eq:hyp-sigma-basic} and using $|\Omega|=|B_R|$ gives
\begin{equation}\label{eq:hyp-area-sigma-all}
    |\Sigma|\sigma_1(\Omega)
    \leq
    \frac{g'(R)|B_R|\,|S_R|^2}{|B_R|^2}
    =|S_R|\frac{g'(R)}{g(R)}
    =|S_R|\sigma_1(B_R).
\end{equation}
Here we used $g(R)=|B_R|/|S_R|$ and $\sigma_1(B_R)=g'(R)/g(R)$.

Finally, let $R_*$ be defined by $|S_{R_*}|=|\Sigma|$.  The hyperbolic isoperimetric inequality gives $|S_R|\leq|\Sigma|=|S_{R_*}|$, hence $R\leq R_*$.  Lemma 3.8 of \cite{GuLiWan2025} states that $|S_r|\sigma_1(B_r)$ is non-decreasing.  Its proof also gives strict increase for $r>0$.  Indeed,
\begin{equation*}
    |S_r|\sigma_1(B_r)
    =|\Sn^{n-1}|\lambda(r)^{n-3}
      \frac{\lambda(r)^2g'(r)}{g(r)},
\end{equation*}
and
\begin{equation*}
    \frac{\lambda^2g'}{g}
    =\left(\frac{\lambda\lambda'g'}{g}\right)
      \left(\frac{\lambda}{\lambda'}\right).
\end{equation*}
The factor $\lambda^{n-3}$ is non-decreasing for $n\geq3$.  The first factor in the last product is positive and non-decreasing by the proof of \cite[Lemma~3.6]{GuLiWan2025}. The second factor is the strictly increasing function $\tanh r$. Therefore
\begin{equation*}
    |S_R|\sigma_1(B_R)
    \leq
    |S_{R_*}|\sigma_1(B_{R_*})
    =|\Sigma|\sigma_1(\Omega^*),
\end{equation*}
with equality only when $R=R_*$.  Combining this with \eqref{eq:hyp-area-sigma-all} and dividing by $|\Sigma|$ proves \eqref{eq:hyp-weinstock-final}.

If equality holds, then $R=R_*$.  Equality follows in the hyperbolic isoperimetric inequality, so $\Omega$ is a geodesic ball.  Conversely, geodesic balls attain equality.
\end{proof}


\appendix
\section{The low-dimensional radial calculation}\label{app:low-dimensional-calculation}

We give the explicit verification of the monotonicity of the function $\Psi$ defined in \eqref{eq:hyp-Psi-def},  following the calculation in Gu's thesis \cite[Section~4.3.2]{GuThesis2026}.

First take $n=3$.  Since $\lambda=\sinh r$,
\begin{equation}\label{eq:hyp-n3-g-gp}
    g(r)=\frac{\lambda\lambda'-r}{2\lambda^2},
    \qquad
    g'(r)=\frac{r\lambda'-\lambda}{\lambda^3}.
\end{equation}
Substituting \eqref{eq:hyp-n3-g-gp} into \eqref{eq:hyp-Psi-def}, and using $(\lambda')^2=\lambda^2+1$, gives
\begin{equation}\label{eq:hyp-Psi-n3-rational}
    \Psi(r)=\frac{U(r)}{V(r)},
\end{equation}
where
\begin{align*}
    U(r)&=4r\lambda(\lambda')^3-3\lambda^2(\lambda')^2
          -2r^2(\lambda')^2-2r\lambda\lambda'+2\lambda^2+r^2,
          \\
    V(r)&=2\lambda^3(r\lambda'-\lambda).
\end{align*}
A direct differentiation of \eqref{eq:hyp-Psi-n3-rational}, followed by the substitution $(\lambda')^2=\lambda^2+1$, gives the following identity:
\begin{equation}\label{eq:hyp-Psi-prime-N}
    \Psi'(r)=\frac{\lambda(r)^2N(r)}{2(r\lambda'(r)-\lambda(r))^2},
\end{equation}
with
\begin{align*}
    N(r)&=-\frac{\lambda'}{\lambda}
       +\frac{11r+4r^3}{\lambda^2}
       -\frac{(3+10r^2)\lambda'}{\lambda^3}
       +\frac{9r+6r^3}{\lambda^4}
       -\frac{9r^2\lambda'}{\lambda^5}
       +\frac{3r^3}{\lambda^6}.
\end{align*}
The reason for introducing $N$ is that its derivative factors with a definite sign:
\begin{equation*}
    N'(r)=
    -\frac{2}{\lambda^7}(r\lambda'-\lambda)
      \left(r\bigl(2+\lambda^2+(\lambda')^2\bigr)-3\lambda\lambda'\right)^2
      \leq0.
\end{equation*}
Moreover $r\lambda'-\lambda>0$ for $r>0$, since its derivative is $r\lambda>0$ and it vanishes at $r=0$.  Expanding to the first nonzero order gives
\begin{equation*}
    N(r)=-\frac{32}{4725}r^7+O(r^9).
\end{equation*}
In particular, $N(r)\to0$ as $r\downarrow0$.  Since $N'\leq0$, it follows that $N(r)\leq0$ for every $r>0$.  Equation \eqref{eq:hyp-Psi-prime-N} now gives $\Psi'(r)\leq0$ for $n=3$.

For $n=4$, the radial function is explicit:
\begin{equation*}
    g(r)=\frac{\lambda(\lambda'+2)}{3(\lambda'+1)^2},
    \qquad
    g'(r)=\frac{1}{(\lambda'+1)^2}.
\end{equation*}
Substitution into \eqref{eq:hyp-Psi-def} gives
\begin{equation*}
    \Psi(r)=\frac13\left(7+\frac{2}{\lambda'+1}
          -\frac{2}{(\lambda'+1)^2}\right)
       =\frac52-\frac16\left(1-\frac{2}{\lambda'+1}\right)^2.
\end{equation*}
Since $\lambda'(r)=\cosh r$ is increasing, the last expression is decreasing.  The expansions $g(r)=r/n+O(r^3)$ and $g'(r)=1/n+O(r^2)$ show that $\Psi$ has a finite continuous extension at the origin.

This completes the verification of the monotonicity of $\Psi$ in
dimensions $3$ and $4$.

\section*{Acknowledgements}
This work was supported by the National Key Research and Development Program of China (2021YFA1001800), the National Natural Science Foundation of China (12531002), and the Fundamental Research Funds for the Central Universities.  The third author was also supported by the China Postdoctoral Science Foundation (2025M783146).

\end{document}